\documentclass[11pt]{article}
\usepackage[T1]{fontenc}
\usepackage[utf8]{inputenc}
\usepackage{authblk}
\usepackage{graphics,wrapfig}
\usepackage{flafter}
\usepackage{amsmath,amsthm,amsfonts,amssymb,epsfig}
\usepackage{cases}
\numberwithin{equation}{section}
\usepackage{leftidx}
\usepackage{mathrsfs}
\usepackage{bbding}
\usepackage{fancyhdr}
\usepackage{mathrsfs}
\usepackage[toc,page,title,titletoc,header]{appendix}
\usepackage{color,xcolor}
\usepackage{subfigure}
\usepackage{stmaryrd}
\usepackage{latexsym}
\usepackage{psfrag}
\usepackage{comment}
\usepackage{graphicx,subfigure}
\usepackage{fancyhdr,graphicx}
\usepackage{multicol}
\usepackage{dsfont}
\usepackage{bbm}
\usepackage{booktabs}
\usepackage[center]{caption2}
\usepackage{cite}
\usepackage{multirow,makecell}
\usepackage{indentfirst}
\usepackage{appendix}
\allowdisplaybreaks
\renewcommand{\leq}{\leqslant}

\date{}
\newtheorem{theorem}{\bf{Theorem}}[section]

\newtheorem{lemma}{\bf {Lemma}}[section]

\newtheorem{remark}{\bf{Remark}}[section]
\newtheorem{example}{\bf{Example}}[section]
\newcommand\norm[1]{\left\lVert#1\right\rVert}
\newcommand{\verti}[1]{{\left\vert\kern-0.25ex\left\vert\kern-0.25ex\left\vert#1
		\right\vert\kern-0.25ex\right\vert\kern-0.25ex\right\vert}}

\begin{document}
\title{A numerical approach for solving the time-Fractional Mobile-Immobile Transport Equation} 
	
\author{~Sandip Maji$^{1,}$\thanks{E-mail address: smaji@shu.edu.cn (Corresponding author)}\,\,}

\affil{${}^1$Department of Mathematics, Shanghai University, 99 Shangda Road, \\
	Shanghai 200444, China}

\maketitle
\begin{abstract}
Fractional diffusion models provide a powerful framework for describing anomalous transport phenomena in heterogeneous porous media. 
The Mobile-Immobile model is a fundamental approach for characterizing such anomalous diffusion, specifically addressing the delayed 
solute transport caused by mass transfer between mobile and immobile regions. In this study, we develop and analyze two fully discrete 
numerical schemes for the one-dimensional time-fractional Mobile-Immobile model governed by the Caputo derivative of order 
$\alpha\in (0,1)$. The spatial discretization is carried out using a non-symmetric interior penalty discontinuous Galerkin method, 
while the temporal derivative is approximated by the Crank-Nicolson L1 and L2-$1_\sigma$ formulas, resulting in two distinct schemes 
with high-order accuracy. Rigorous stability and error analyses are established, showing that the methods achieve optimal convergence 
rates depending on the regularity of the exact solution. Numerical experiments verify the theoretical predictions and demonstrate the 
efficiency and robustness of the proposed schemes. The presented framework provides a reliable and accurate approach for simulating 
complex fractional transport processes in porous media and related fields.
\vskip 5pt
		
{\bf Keywords.} Time-fractional mobile/immobile equation, Discontinuous Galerkin method, Stability, Error estimate.
\vskip5pt
		
{\bf AMS subject classifications.} 35R11, 65M12, 65M15, 65M60.
\end{abstract}

\section{Introduction}

The study of diffusion and dispersion processes has long been a central topic in physics, chemistry, and hydrology. 
In natural systems such as heterogeneous soils, aquifers, and rivers, tracer transport is often observed to deviate 
from the classical Fickian model, exhibiting what is known as anomalous dispersion or non-Fickian transport. This 
anomalous behavior is typically characterized by nonlinear growth of the mean-squared displacement or non-Gaussian 
(heavy-tailed) concentration profiles. Depending on whether the growth rate of the mean-squared displacement is faster 
or slower than linear, the transport process is referred to as super-diffusion or sub-diffusion, respectively 
\cite{bouchaud1990anomalous}. These anomalous behaviors are particularly important in modeling the migration of 
contaminants and sediment in natural porous media, where early arrivals can cause rapid contamination, while 
late arrivals complicate cleanup and remediation efforts. Comprehending these anomalous diffusion processes is 
crucial for both solute transport and sediment transport in downstream systems, where comparable non-Fickian 
features have been noted.

As sediment travels downstream, it spreads out and disperses when transported by wind or water movement. The 
intricate problem of sediment movement and scattering has gotten significantly fewer interest than the numerous 
formulas created to forecast the overall outflow of sediment at a specific spot in a river. However, there are 
numerous uses for which understanding the velocity and rate of dispersal of a sedimentary deposit is crucial. 
Examples include the build-up of cosmogenic radionuclides in sediment grains while movement, the time lag 
between a grain's erosional exhumation and delivery to a sedimentary basin, and the fate and movement of 
solid-phase contaminants in streams. Stochastic models of sediment transport are necessary for these
applications, and these models need data sets to evaluate their predictions. Particularly, data
sets can show how much the dispersion process adheres to conventional Fickian behavior in contrast to the ``fractional''
or ``anomalous'' dispersion that has been seen in a variety of natural systems. Such observations suggest that 
classical diffusion equations may be inadequate to capture the underlying transport dynamics, motivating the 
use of generalized, fractional-order models.

Einstein defined the movement of particles as a series of random-length steps interspersed with random-duration 
rests \cite{bradley2010fractional}. A distribution of probabilities that forecasts the possibility that an object 
will travel farther than a specific distance or stay motionless for a longer period of time can ``encapsulate" all
of the multifaceted nature, association, and fluctuation in the factors influencing the destruction, shipment, 
and release of sediment. This is the underlying assumption of this model and many others that treat the transport 
of fluvial sediment as a random walk. In particular, each of these models assumes that the step length and
resting time distributions have distinct mean values encircled by distinctively high levels of variability. If this 
assumption is incorrect, what will happen? Given the complexity and variability of geomorphic transport processes, 
what happens if the distribution that has a finite mean or variance fails to adequately represent the underlying process? 
These questions motivate the development of more general stochastic and fractional models for sediment and solute transport. 
These conceptual insights laid the groundwork for mathematical frameworks such as the advection dispersion equation 
and its fractional extensions, developed to formalize anomalous transport processes.

Historically, the classical advection-dispersion equation has been derived under the assumption of Fickian transport, 
which follows from the central limit theorem applied to random walks with finite first and second moments. 
However, numerous field experiments-most notably the Macro Dispersion Experiment conducted in 
Mississippi-revealed deviations from this classical model. Benson et al. \cite{Benson_etal_2001} showed 
that random walk models with heavy-tailed (power-law) step-length distributions, exhibiting divergent moments, 
could successfully reproduce the observed scale-dependent behavior of solute plumes. Such distributions violate 
the assumptions of the classical central limit theorem and instead correspond to fractional-order generalizations 
of the advection-dispersion equation, where the time and space derivatives are of non-integer order \cite{benson1998fractional}.
Building upon these fractional generalizations, researchers developed stochastic models that explicitly account 
for particle trapping and exchange between mobile and immobile regions.

To better describe these non-Fickian transport phenomena, the Continuous Time Random Walk (CTRW) framework 
and the multiple rate mass transfer model were developed. Dentz and Berkowitz \cite{dentz2003transport} 
established a connection between these two frameworks by relating the multiple rate mass transfer memory
function to the CTRW transition-time distribution. Benson and Meerschaert \cite{benson2009simple} further 
clarified that the multiple rate mass transfer model can be interpreted as a stochastic process involving 
mobile and immobile particles with exponentially distributed mobile times. Schumer et al. \cite{schumer2003fractal} 
extended these concepts by introducing the fractional-order Mobile-Immobile (MIM) diffusion model, which accounts 
for the mass exchange between mobile and immobile phases through fractional time derivatives. Although the fractional 
MIM model provides a robust framework for describing such processes, developing accurate and efficient numerical 
schemes to solve it remains a challenging task due to the nonlocal nature of fractional derivatives.

The subsequent time-fractional MIM diffusion model is taken into consideration during this investigation:
\begin{equation}\label{MIM_Model}
	\left\{
	\begin{array}{lll}
		\lambda_1\dfrac{\partial u(x,t)}{\partial t}+\lambda_2\, {}_C{\rm{D}}^{\alpha}_{0,t} u(x,t)+\mathcal{L} u(x,t)=f(x,t),\,\,
		(x,t)\in \Omega\times G,\\ [7pt]
		u(x,0)=\phi(x),\quad x\in \Omega,\\ [7pt]
		u(0,t)=u(L,t)=0, \quad t\in G,
	\end{array}
	\right.
\end{equation}
where $\Omega=(0,L),\, G=(0,T],\, 0< \alpha< 1$, $f$ and $\phi$ are given functions,
$\lambda_i>0\, (i=1,2)$ are constants, and ${}_C{\rm{D}}^{\alpha}_{0,t}$
is the Caputo fractional derivative operator which is defined by
\[{}_C{\rm{D}}^{\alpha}_{0,t} v(t)=\int\limits_0^t \omega_{1-\alpha}(t-s) \dfrac{\partial v(s)}{\partial s}{\rm{d}}s,\quad \omega_{1-\alpha}(t)=
\dfrac{t^{-\alpha}}{\Gamma(1-\alpha)},\, \alpha\in (0,1),\, t\in G,\]
in which $v$ is often assumed to be in the standard space of absolutely continuous functions $AC(\overline{G})$ to guarantee 
the existence of the integral of the above equation, and $\mathcal{L}$ is the spatial derivative operator with the constant 
coefficients $\gamma_j>0\, (j=1,2)$ defined as
\[\mathcal{L}u(x,t)=-\gamma_1 \dfrac{\partial^2 u(x,t)}{\partial x^2}+\gamma_2 u(x,t).\]
In this case, the solute concentration in the whole (mobile + immobile) phase can be expressed by $u$. The motion time is described by the time drift term $\dfrac{\partial u}{\partial t}$, which makes it easier to differentiate the particle status \cite{schumer2003fractal}.

Finite difference, finite volume, and finite element methods are among the numerical techniques that had been invented over the past few decades to resolve fractional-order (or fractional for brevity) partial differential equations \cite{Li_Zeng_CRC15}.
Among these, for the spatial variable discretization in partial differential equations (PDEs), discontinuous Galerkin (DG) 
methods have emerged as particularly flexible and powerful tools for both integer and fractional PDEs 
\cite{BenLi_DGM_book,Maji_MMAS2024,Maji_CoAM2024,Beatrice_book_DGM,Wei_Yang_JCAM21}. Originating from
Reed and Hill’s work \cite{reed1973triangular} on the neutron transport equation, DG methods have evolved through key 
contributions such as Baker’s formulation for second-order elliptic problems \cite{Baker_Moc77}, the symmetric interior 
penalty Galerkin (SIPG) method by Wheeler \cite{Wheeler_Siam78} and Arnold \cite{Arnold_Siam82}, and the non-symmetric 
interior penalty Galerkin (NIPG) method introduced by Oden et al. \cite{Oden1998}. The DG framework’s flexibility, 
element-wise discontinuity, and suitability for complex meshes make it an ideal choice for modeling anomalous
transport processes. When researching efficient higher-order numerical techniques for time-fractional PDEs, it is necessary to concentrate on a higher-level discretization based formula for the Caputo derivative in the temporal direction in addition to spatial discretization. 
A few discretization formulas for the Caputo derivative of order $\alpha\in (0,1)$ have significantly emerged in the literature 
over the last few decades: Oldham and Spanier \cite{oldham1974fractional} developed the L1 formula of $(2-\alpha)$-order; 
Gao, Sun, and Zhang \cite{Gao_JCP14} studied the L1-2 formula of $(3-\alpha)$-order; Alikhanov \cite{Alikhanov_JCP15} introduced 
the L2-$1_\sigma$  formula of $(3-\alpha)$-order; and Lv and Xu \cite{lv2016error} examined another formula of $(3-\alpha)$-order.

The numerical solution for the time-fractional MIM diffusion model has been the subject of research in recent years. 
For instance, Zhang et al. \cite{Zhang_etal_CAMWA13} performed a stability and convergence analysis of the numerical 
scheme for the time-fractional MIM model and developed an implicit finite difference approach. In the numerical analysis 
of the fractional MIM model, Yu et al. \cite{Yu_etal_IJCM18} proposed a numerical method to compute the fractional 
sensitivity matrix. After estimating the fractional parameters using the nonlinear Levenberg-Marquardt iterative method, 
they developed two methods with second and fourth-order precision in the spatial direction. The convergence of a 
linearized Galerkin FEM for the nonlinear MIM model and L2-$1_\sigma$ for temporal discretization was covered by Guan et al. 
\cite{Guan_etal_APNUM22}. For the MIM model, Zheng and Wang \cite{Zheng_Wang_AML22} suggested an averaged L1-type compact 
difference approach. The DG method and CN-L1 or L2-$1_\sigma$ formulas for the investigation of MIM model have received 
far less attention. This encourages us to revisit the numerical solution of the time-fractional MIM model.

For the initial-boundary value problem (IBVP) \eqref{MIM_Model}, we investigate two complementary temporal discretization 
procedures inside the DG framework to achieve both stability and high accuracy. The development and thorough investigation of a high-order accurate numerical approach for the time-fractional MIM diffusion model constitute the originality of this work.
While most existing studies are limited to less than second order accuracy or focus on simpler fractional diffusion equations, 
we propose two distinct fully discrete formulations that combine the NIPG method in space with different temporal discretizations 
L1 formula within a Crank-Nicolson (CN) framework and L2-1$\sigma$ for the IBVP \eqref{MIM_Model}, which contain both first 
and fractional order time derivative. Specifically, we construct and analyze both the CN-L1-NIPG and the L2-1$\sigma$-NIPG 
schemes, which provide $(2-\alpha)$- and second-order temporal accuracy, respectively. In addition to guaranteeing numerical stability, the completely discrete formulation offers an ideal error estimate that clearly depends on the regularity of the precise solution. Numerical tests confirm theory-based conclusions, demonstrating the scheme's accuracy and robustness. Hence, this study contributes a unified and efficient framework for the accurate simulation of time-fractional MIM
diffusion phenomena, with potential applicability to hydrology, contaminant transport, and sediment dispersion modeling.

This is the structure of the rest of the paper. Section \ref{prel_sec} introduces the spatial and temporal 
discretization along with several preliminary lemmas. Section \ref{full_dis_sec} presents two fully discrete 
formulations of the problem and analyze the stability and convergence of the proposed schemes. In Section 
\ref{Num_exp_sec}, we validate the theoretical results through numerical experiments. Finally, Section 
\ref{Conclusion_sec} concludes the main results of this article with remarks and potential directions for future work.

\section{Preliminaries}\label{prel_sec}
This section will outline a number of notations with relevant details that will be utilized throughout the article. 
Regardless of the mesh spacing,
we always assume that $C$ represents a generic positive constant.

\subsection{Notations for space of functions}
Let $k,l\in \mathbb{N}\cup \{0\}$, where $\mathbb{N}$ be the set of positive integer numbers. Let $L^2(\Omega)$ represent the space of square power Lebesgue integrable functions over $\Omega$, and $C^l[0,T]$ represent the space of $l$ times continuously differentiable functions defined on $[0,T]$. Let $C^\infty_0(\Omega)$, the space of infinitely differentiable functions with compact support in $\Omega$, be completed in $H^k(\Omega)$. Let $H^k(\Omega)$ be the standard Sobolev space of order $k$ over $\Omega$. For two spaces $V(\Omega),W[0,T]$, define the following space of functions as
\begin{eqnarray*}
	W\left([0,T];V(\Omega)\right)&:=&\left\{v: \Omega\times[0,T]\to\mathbb{R} 
	\Big\vert\,\, v(\cdot,t)\in V(\Omega),\,\forall t\in[0,T],\right.\\
	&&\left.\quad\text{and}\,\, v(x,\cdot)\in W[0,t],\,\forall x\in\Omega \right\}.
\end{eqnarray*}

\subsection{Basic definitions related to NIPG method}
Consider the mesh over $\overline{\Omega}=[0,L]$ as $\overline{\Omega}_M=\{x_m=mh: h=L/M,\, 0\leqslant m\leqslant M\}$ 
and corresponding partition of $\Omega$ as $\mathcal{P}_M=\{K_m=(x_m,x_{m+1}): 0\leqslant m\leqslant M-1\}$. 
Now, define a special type of space which is require for DG method, called broken Sobolev space of order 
$k\geqslant 0$ associated with the family $\mathcal{P}_M$, as
\begin{equation*}
	H^{k} (\Omega,\mathcal{P}_M) = \left\{w \in L^2(\Omega): w_{\restriction_{{K}_m}}\in H^{k}({K}_m),\,\, 
	\forall {K}_m \in \mathcal{P}_M  \right\},
\end{equation*}
where $H^k(K_m)$ be the standard Sobolev space over $K_m$.

The associated broken Sobolev norm and semi-norm are described by
\[\norm{w}^2_{k} =\sum_{i=0}^{k} \sum_{m=0}^{M-1} \left\|\dfrac{d^iw}{dx^i}\right\|^2_{{K}_m}, 
\quad \vert w\vert^2_{k} = \sum_{m=0}^{M-1}
\left\|\dfrac{d^kw}{dx^k}\right\|^2_{{K}_m},\quad \norm{w}=\norm{w}_0. \]
It is easy to observe that $H^k(\Omega)\subset H^k(\Omega,\mathcal{P}_M)$ for $k\geqslant0$, for more details 
see \cite[Lemma 1.22]{Pietro_2012}.

For a fixed $k\geqslant 1$, the finite element space $V^k_{0,h}\subset H^{k} (\Omega,\mathcal{P}_M)$ associated 
with the family $\mathcal{P}_M$ will be defined as follows:
\begin{equation*}
	V^k_{0,h}= \left\{ w\strut_h \in L^2(\Omega): {w\strut_h}_{\restriction_{{K}_m}}\in \mathbb{P}^k({K}_m),\,\,
	\forall {K}_m \in \mathcal{P}_M,\, {w\strut_h}_{\restriction_{\partial\Omega}}=0 \right\},
\end{equation*}
where the space of polynomials of degree at most $k$ on ${K}_m$ is denoted by $\mathbb{P}^k({K}_m)$.
Here, $V^k_{0,h}$ allows us to choose the functions which may discontinuous across the grid points. For $w\in V^k_{0,h}$, the left and right limits, jump, and average at $x_m$ are shown by the following symbols:
\[w^{\pm}_m=w(x_m\pm 0),\,\,[w]_m=w_m^+-w_m^-,\,\, \{w\}_m=\frac{1}{2}(w_m^++w_m^-),\quad 1\leqslant m\leqslant M-1.\]

For $w_1,w_2\in V^k_{0,h}$, define the inner products over $K_m$ and $\Omega$ as
\[\left( w_1,w_2\right)_{K_m}=\int_{{K}_m} {w_1}\,{w_2}\,{\rm{d}}x,\quad \left( w_1,w_2\right) 
=\sum\limits_{m=0}^{M-1} \left( w_1,w_2\right)_{K_m}.\]

Consider the local basis functions of $\mathbb{P}^k({K}_m)$ as $\{\psi_i^m:i=0,1,\cdots,k\}$ and the corresponding 
global basis functions $\{\Psi_i^m\}$ for the space $V^k_{0,h}$ are obtained from the local basis functions by 
extending them by zero:
\[\Psi_i^m(x)=
\begin{cases}
	\psi_i^m(x),& x\in K_m,\\
	0,& x\notin K_m.
\end{cases}\]

Therefore, any function $w_h\in V^k_{0,h}$ can be expressed as
\begin{equation}\label{FES}
	w_h(x)=\sum_{m=0}^{M-1}\sum_{i=0}^k\beta_i^m\Psi_i^m(x),\quad \forall x\in\Omega.
\end{equation}

\subsection{Discretization of the time derivative}

Discretizing the time domain $G$ uniformly as $\overline{G}_N=\{t_n= n\tau,\, \tau=T/N, 0\leqslant n\leqslant N\}$.
Denote $t_{n+\sigma}=t_n+\sigma\tau$ for $0\leqslant n\leqslant N-1,\,0< \sigma< 1$. For any function $v$ with $v(t_n)=v^n$, 
consider the following for $0\leqslant n\leqslant N-1$: $v^{n+\frac{1}{2}}=\frac{1}{2}(v^n+v^{n+1}),$ and
\[\delta v^n=v^{n+1}-v^{n},\, \delta v^{n+\frac{1}{2}}
=v^{n+\frac{3}{2}}-v^{n+\frac{1}{2}},\,\,  v^{n,\sigma}=\sigma v^{n+1}+(1-\sigma)v^n.\]

Also, we consider $L_{2,k}v$ as the Lagrange form of the interpolation polynomial of second degree for the function
$v$ with nodes $t_k,t_{k+1},t_{k+2}$, and $L_{1,k}v$ as first degree Lagrange interpolation with nodes $t_k,t_{k+1}$.
Here,
\[(L_{2,k}v)'(s)=\frac{1}{\tau}\delta v^k +\frac{1}{\tau^2}(\delta v^{k+1}-\delta v^k)(s-(k+{1}/{2})\tau),
\quad s\in(t_k,t_{k+1}),\, 0\leqslant k\leqslant n-1,\]
and
\[(L_{1,k}v)'(s)=\frac{1}{\tau}\delta v^k,\quad s\in(t_k,t_{k+1}),\quad 0\leqslant k\leqslant n.\]

\subsubsection{CN-L1 discretization}

The standard L1 discretization \cite{oldham1974fractional} for the Caputo derivative of $v$ at $t_{n}$ can be described as:
\begin{eqnarray}\label{L1_discre}
	{}_C{\rm{D}}^{\alpha}_{0,t}v(t_{n})\approx {}_{L1}{\rm{D}}^{\alpha}_nv&:=&\sum_{k=0}^{n-1}
	\int_{t_k}^{t_{k+1}}(L_{1,k}v)'(s) \omega_{1-\alpha}(t_n-s){\rm{d}}s\nonumber\\
	&=&\sum\limits_{k=0}^{n-1}\dfrac{ \delta v^k} {\tau} \int_{t_k}^{t_{k+1}} \omega_{1-\alpha}(t_n-s){\rm{d}}s\nonumber\\
	&=&\sum\limits_{k=1}^{n}\, d_{k}\, \delta v^{n-k},
\end{eqnarray}
where
\begin{equation}\label{L1_dis}
	d_{k}=\dfrac{1}{\tau^\alpha}[\omega_{2-\alpha}(k)-\omega_{2-\alpha}(k-1)], \quad 1\leqslant k\leqslant n,\quad 1\leqslant n\leqslant N.
\end{equation}

It is easy to observe that $d_{k}\geqslant d_{k+1}>0$.
The following lemma describes the accuracy for the L1 discretization from \cite[Section 3]{Langlands_Henry_JCP2005}.
\begin{lemma}
	For each $t_{n},\,{n}=1,2,\cdots,n,\,\alpha\in(0,1)$, and $v\in C^2[0,T]$, we have the following:
	\begin{equation*}
		\vert{}_C{\rm{D}}^{\alpha}_{0,t}v(t_{n})- {}_{L1}{\rm{D}}^{\alpha}_nv\vert\leqslant C\tau^{2-\alpha}\, \max\limits_{t\in[0,t_{n+1}]} \vert v''(t)\vert.	
	\end{equation*}
\end{lemma}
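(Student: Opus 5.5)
We write the truncation error as a sum of local interpolation errors and then bound the resulting weakly singular integrals. Since ${}_{L1}{\rm{D}}^{\alpha}_nv$ is obtained by replacing $v'$ on each $(t_k,t_{k+1})$ with $(L_{1,k}v)'$, we have
\[
R^n:={}_C{\rm{D}}^{\alpha}_{0,t}v(t_{n})- {}_{L1}{\rm{D}}^{\alpha}_nv=\sum_{k=0}^{n-1}\int_{t_k}^{t_{k+1}}\bigl(v-L_{1,k}v\bigr)'(s)\,\omega_{1-\alpha}(t_n-s)\,{\rm{d}}s .
\]
The key step is to integrate by parts on each subinterval. The interpolation error $e_k:=v-L_{1,k}v$ vanishes at both nodes $t_k$ and $t_{k+1}$, so the boundary terms drop out, except at the last interval, where the kernel is singular at $s=t_n$. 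There we use $e_{n-1}(s)=O(\tau\,(t_n-s))$ near $t_n$ (since $e_{n-1}(t_n)=0$ and $e_{n-1}'$ is bounded by $C\tau\max|v''|$). Hence $e_{n-1}(s)\,\omega_{1-\alpha}(t_n-s)\to 0$ as $s\to t_n^-$ because $\alpha<1$. This gives
\[
R^n=-\sum_{k=0}^{n-1}\int_{t_k}^{t_{k+1}} e_k(s)\,\partial_s\omega_{1-\alpha}(t_n-s)\,{\rm{d}}s
=-\frac{\alpha}{\Gamma(1-\alpha)}\sum_{k=0}^{n-1}\int_{t_k}^{t_{k+1}} e_k(s)\,(t_n-s)^{-\alpha-1}\,{\rm{d}}s .
\]

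Next we use the standard Lagrange remainder $e_k(s)=\tfrac12 v''(\xi_k)(s-t_k)(s-t_{k+1})$, which gives $|e_k(s)|\le \tfrac12 (s-t_k)(t_{k+1}-s)\max|v''|$. We split the sum into the last interval and the rest.

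For $k=n-1$, the bound $|e_{n-1}(s)|\le \tfrac{\tau}{2}(t_n-s)\max|v''|$ yields
\[
\int_{t_{n-1}}^{t_n}\tau (t_n-s)^{-\alpha}\,{\rm{d}}s=\frac{\tau^{2-\alpha}}{1-\alpha}.
\]
For $k\le n-2$, we use $|e_k|\le \tfrac{\tau^2}{8}\max|v''|$, so
\[
\frac{\tau^2}{8}\int_{0}^{t_{n-1}}(t_n-s)^{-\alpha-1}\,{\rm{d}}s\le \frac{\tau^2}{8\alpha}\,\tau^{-\alpha}.
\]
Multiplying by $\alpha/\Gamma(1-\alpha)$ cancels the factor $1/\alpha$. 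Both pieces are therefore $C\tau^{2-\alpha}\max_{[0,t_n]}|v''|$ with $C$ depending only on $\alpha$, which is the claimed estimate; the stated maximum over $[0,t_{n+1}]$ is only weaker.

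The main obstacle is justifying the integration by parts on the final subinterval, where the kernel $(t_n-s)^{-\alpha-1}$ is not integrable. We will handle this by integrating over $[t_{n-1},t_n-\epsilon]$, noting that the boundary term is $O(\tau\epsilon^{1-\alpha})\to0$, and letting $\epsilon\to0$. The resulting integral converges because $e_{n-1}$ vanishes linearly at $t_n$. The rest is routine.
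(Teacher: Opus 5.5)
Your proof is correct. The paper does not prove this lemma itself; it only cites Langlands and Henry \cite[Section 3]{Langlands_Henry_JCP2005}, so there is no argument in the paper to compare step by step.

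What you supply is the standard self-contained integration-by-parts proof, and each step checks out. Moving the derivative from $v-L_{1,k}v$ onto the kernel produces $\frac{\alpha}{\Gamma(1-\alpha)}(t_n-s)^{-\alpha-1}$. The interpolation error is $O(\tau^2)$ on the cells away from $t_n$, and on the last cell it vanishes linearly at $t_n$. Your truncation to $[t_{n-1},t_n-\epsilon]$ legitimately handles the singular endpoint: the boundary term is $O(\tau\epsilon^{1-\alpha})$, and the limiting integrand is dominated by $C\tau(t_n-s)^{-\alpha}$.

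Compared with the bare citation, your route gives more:
\begin{itemize}
\item an explicit constant $C=\frac{\alpha}{2\Gamma(2-\alpha)}+\frac{1}{8\Gamma(1-\alpha)}$, which is independent of $n$ and even bounded uniformly for $\alpha\in(0,1)$;
\item a proof that uses only $v\in C^2$;
\item a bound in terms of $\max_{[0,t_n]}|v''|$. This is slightly stronger than the stated $\max_{[0,t_{n+1}]}|v''|$, and it avoids the awkwardness that $[0,t_{N+1}]$ is not contained in $[0,T]$ when $n=N$.
\end{itemize}
The citation buys only brevity.
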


Now, define L1 discretization within a CN framework of Caputo derivative at $t_{n+\frac{1}{2}}$ as
\begin{equation}
	{}_{L1}{\rm{D}}^{\alpha}_{n+\frac{1}{2}}v=\frac{1}{2}\Big({}_{L1}{\rm{D}}^{\alpha}_nv+{}_{L1}{\rm{D}}^{\alpha}_{n+1}v\Big) =\sum\limits_{k=1}^{n}d_{k}\,
	\delta v^{n-k+\frac{1}{2}}+\dfrac{d_{n+1}}{2} (v^1-v^0),\quad 0\leqslant n\leqslant N-1,
\end{equation}
and for any $v\in C^3[0,T]$, use Crank-Nicolson discretization for $\frac{dv}{dt}$ at $t_{n+\frac{1}{2}}$ as
$\frac{dv}{dt}\Big\vert_{t_{n+\frac{1}{2}}}\approx\frac{\delta v^n}{\tau}$, with
\[\left\vert{r}^{n+\frac{1}{2}}_1\right\vert:=\left\vert{}_C{\rm{D}}^{\alpha}_{0,t}v(t_{n+\frac{1}{2}})- 
{}_{L1}{\rm{D}}^{\alpha}_{n+\frac{1}{2}}v \right\vert\leqslant C\tau^{2-\alpha}\, \max\limits_{t\in[0,t_{n+1}]} \vert v''(t)\vert,\]
and
\[ \left\vert r_2^{n+\frac{1}{2}}\right\vert=\left\vert\dfrac{dv}{dt}\Big\vert_{t_{n+\frac{1}{2}}}-
\frac{\delta v^n}{\tau}\right\vert\leqslant C\tau^2\, \max\limits_{t\in[0,t_{n+1}]} \vert v'''(t)\vert. \]
Therefore, for any $v\in C^3[0,T]$, truncation error due to CN-L1 discretization of both first order and fractional 
derivatives has the following bound:
\[\left\vert{r}^{n+\frac{1}{2}}_1\right\vert+\left\vert{r}^{n+\frac{1}{2}}_2\right\vert\leqslant C\tau^{2-\alpha}\,
\max\limits_{t\in[0,t_{n+1}]} \left( \vert v''(t)\vert+\vert v'''(t)\vert\right).\]

\subsubsection{L2-$1_\sigma$ discretization}
Now, let us recall the L2-$1_\sigma$ discretization \cite{Alikhanov_JCP15} for the Caputo derivative of function 
$v\in C^3[0,T]$ at $t=t_{n+\sigma}$ with $\sigma=1-\frac{\alpha}{2}$ as
\begin{eqnarray}\label{L21sigma}
	&&\hspace{-1.2cm}{\delta}^{\alpha}_{n+\sigma}v= \sum\limits_{k=0}^{n-1}  \int_{t_k}^{t_{k+1}}(L_{2,k}v)'\, \omega_{1-\alpha}(t_{n+\sigma}-s){\rm{d}}s+\int_{t_n}^{t_{n+\sigma}}(L_{1,n}v)'\,\omega_{1-\alpha}(t_{n+\sigma}-s){\rm{d}}s\nonumber\\
	&&= \sum\limits_{k=1}^{n} \frac{\delta v^{n-k}}{\tau^\alpha} \int_{0}^{1} \omega_{1-\alpha}(k+\sigma-\mu){\rm{d}}\mu+
	\frac{\delta v^{n}}{\tau^\alpha} \int_{0}^{\sigma} \omega_{1-\alpha}(\sigma-\mu){\rm{d}}\mu\nonumber\\
	&&\hspace{.3cm}+\sum\limits_{k=1}^{n} \frac{\delta v^{n-k+1}-\delta v^{n-k}}{\tau^\alpha} \int_{0}^{1} 
	\omega_{1-\alpha}(k+\sigma-\mu) (\mu-\frac{1}{2}){\rm{d}}\mu\nonumber\\
	&&= \sum\limits_{k=0}^{n} a_k\delta v^{n-k} + \sum\limits_{k=1}^{n} b_k (\delta v^{n-k+1}-\delta v^{n-k})\nonumber\\
	&&= \sum\limits_{k=0}^{n} g_{k}^n \,\delta v^{n-k},
\end{eqnarray}
where $a_{0}=\tau^{-\alpha}\int_{0}^{\sigma} \omega_{1-\alpha}(\sigma-\mu){\rm{d}}\mu=\frac{1}{\tau}\omega_{2-\alpha}
( t_{\sigma}),\,\, b_0=0$; and for $n\geqslant 1,\, 1\leqslant k\leqslant n$,
\begin{eqnarray*}
	a_{k}&=& \tau^{-\alpha}\int_{0}^{1} \omega_{1-\alpha}(k+\sigma-\mu){\rm{d}}\mu=\frac{1}{\tau}\left[\omega_{2-\alpha}
	(t_{k+\sigma})-\omega_{2-\alpha}(t_{k-1+\sigma})\right],\\
	b_{k}&=& \tau^{-\alpha}\int_{0}^{1} \omega_{1-\alpha}(k+\sigma-\mu) (\mu-\frac{1}{2}){\rm{d}}\mu\\
	&=&\frac{1}{\tau^2}\left[\omega_{3-\alpha}(t_{k+\sigma})-\omega_{3-\alpha}(t_{k-1+\sigma}) \right]-\frac{1}{2\tau}
	\left[\omega_{2-\alpha}(t_{k+\sigma})+\omega_{2-\alpha}(t_{k-1+\sigma}) \right],
\end{eqnarray*}
with $g_0^0=a_0$, if $n=0$; and
\begin{eqnarray}\label{gnk}
	g_{k}^n=
	\begin{cases}
		a_{k}+b_{k+1}-b_k, & 0\leqslant k\leqslant n-1,\\
		a_{n}-b_{n}, & k=n,
	\end{cases}
\end{eqnarray}
if $n\geqslant 1$.

Next, the following lemma describes the properties of the coefficients in \eqref{gnk} of the L2-$1_\sigma$ discretization.
\begin{lemma}\cite[Lemma 4]{Alikhanov_JCP15}
	The coefficients $g_k^n$ in L2-$1_\sigma$ discretization defined in \eqref{gnk} satisfies the following properties 
	with $\sigma=1-\frac{\alpha}{2}$:
	\begin{eqnarray*}
		g_n^n >\frac{1}{2}\omega_{1-\alpha}(t_{n+\sigma})>0,\quad n\geqslant 0;\\
		g_0^n>g_{1}^n>\cdots>g_{n-1}^n>g_n^n, \quad n\geqslant 1;\\
		(2\sigma-1)g_0^n-\sigma g_{1}^n>0,\quad n\geqslant 1.
	\end{eqnarray*}
\end{lemma}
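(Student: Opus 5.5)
The plan is to compute the coefficients in closed form as integrals of the kernel and derive all three properties from monotonicity and convexity of $\omega_{1-\alpha}$. Set $\tau=1$ by scaling; every $g_k^n$ carries the common factor $\tau^{-\alpha}$, which is irrelevant for signs and orderings. Write $\omega(s)=\omega_{1-\alpha}(s)=s^{-\alpha}/\Gamma(1-\alpha)$, which is positive, strictly decreasing and strictly convex on $(0,\infty)$. Then
\[
a_k=\int_0^1\omega(k+\sigma-\mu)\,{\rm d}\mu, \qquad b_k=\int_0^1\omega(k+\sigma-\mu)\Big(\mu-\tfrac12\Big){\rm d}\mu .
\]
Since $\omega(k+\sigma-\mu)$ is increasing in $\mu$ and $\int_0^1(\mu-\frac12){\rm d}\mu=0$, Chebyshev's integral inequality gives $b_k>0$. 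Convexity gives an upper bound: comparing $\omega(k+\sigma-\mu)$ with its chord on $[0,1]$ yields
\[
b_k\leqslant \tfrac1{12}\big(\omega(k-1+\sigma)-\omega(k+\sigma)\big).
\]
It also gives $b_k$ decreasing in $k$, since the slope of $\omega$ decreases in absolute value.

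\textbf{First property.} We need $g_n^n=a_n-b_n>\frac12\omega(n+\sigma)$, and
\[
a_n-b_n=\int_0^1\omega(n+\sigma-\mu)\Big(\tfrac32-\mu\Big){\rm d}\mu .
\]
The weight $\frac32-\mu$ is positive with total mass $1$. Using $\omega(n+\sigma-\mu)\geqslant\omega(n+\sigma)$ gives $a_n-b_n\geqslant\omega(n+\sigma)>\frac12\omega(n+\sigma)$ for $n\geqslant1$. For $n=0$, $g_0^0=a_0=\int_0^\sigma\omega(\sigma-\mu){\rm d}\mu\geqslant\sigma\,\omega(\sigma)>\frac12\omega(\sigma)$, because $\sigma=1-\alpha/2>\frac12$.

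\textbf{Second property.} For $1\leqslant k\leqslant n-2$,
\[
g_k^n-g_{k+1}^n=(a_k-a_{k+1})-(b_{k+2}-2b_{k+1}+b_k).
\]
Writing everything as integrals against second differences of $\omega$, I would show that $a_k-a_{k+1}$, which is an integral of a first difference of $\omega$, dominates the second difference of the $b$'s. This uses the bound $|b_{k+2}-2b_{k+1}+b_k|\leqslant\frac14\int_0^1|\omega(k+\sigma-\mu)-2\omega(k+1+\sigma-\mu)+\omega(k+2+\sigma-\mu)|\,{\rm d}\mu$, whose right-hand side is smaller than $a_k-a_{k+1}$ by convexity, since the second difference is less than the first difference for a decreasing convex function. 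The endpoint comparisons need separate treatment:
\begin{itemize}
\item $g_{n-1}^n>g_n^n$, which reduces to $a_{n-1}-a_n+2b_n-b_{n-1}>0$ and follows from $b_n>0$ and $b_{n-1}\leqslant\frac1{12}(\text{first difference})<a_{n-1}-a_n$;
\item $g_0^n>g_1^n$, which involves the different coefficient $a_0$.
\end{itemize}

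\textbf{Third property.} We need $(2\sigma-1)g_0^n-\sigma g_1^n>0$, that is, $(1-\alpha)g_0^n>\sigma g_1^n$. This is the main obstacle, because it depends on the specific choice $\sigma=1-\alpha/2$. I would compute explicitly
\[
a_0=\frac{\sigma^{1-\alpha}}{\Gamma(2-\alpha)}, \qquad a_1=\frac{(1+\sigma)^{1-\alpha}-\sigma^{1-\alpha}}{\Gamma(2-\alpha)},
\]
bound $g_0^n\geqslant a_0+b_1-b_1=a_0$ from below using $b_{k+1}-b_k$ small, and bound $g_1^n\leqslant a_1+b_2\leqslant a_1+\frac1{12}(\text{first difference})$ from above. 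This reduces the claim to a scalar inequality in $\alpha\in(0,1)$, namely $(1-\alpha)\sigma^{1-\alpha}>\sigma\big[(1+\sigma)^{1-\alpha}-\sigma^{1-\alpha}\big]$ plus small corrections. I would verify it by analyzing the function of $\alpha$, via endpoint values and monotonicity. I expect this elementary but delicate inequality to be the hardest step, and here I would follow Alikhanov's original computation.
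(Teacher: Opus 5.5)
The paper gives no proof of this lemma; it quotes Alikhanov's Lemma 4, so the comparison is with that source. Your first property is fine. Your interior comparisons $g_k^n>g_{k+1}^n$, $1\leqslant k\leqslant n-2$, also work. The constant there should be $\frac12=\max|\mu-\frac12|$, not $\frac14$, giving $|b_{k+2}-2b_{k+1}+b_k|\leqslant\frac12\int_0^1\big(\omega(k+\sigma-\mu)-2\omega(k+1+\sigma-\mu)+\omega(k+2+\sigma-\mu)\big){\rm d}\mu\leqslant\frac12(a_k-a_{k+1})$, which still suffices.

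\textbf{The gap: the third property.} This is the crux of the lemma, and you do not prove it. You reduce it to a scalar inequality and then defer to Alikhanov's computation, which is circular because that computation is the lemma itself. Your proposed route via endpoint values also gives no information. Both sides of $(1-\alpha)\sigma^{1-\alpha}>\sigma[(1+\sigma)^{1-\alpha}-\sigma^{1-\alpha}]$ tend to $1$ as $\alpha\to0$ and to $0$ as $\alpha\to1$. In addition, bounding $g_1^n\leqslant a_1+b_2$ discards the $-b_1$ and creates a correction $\sigma b_2$ that, like the margin, is $O(\alpha)$ as $\alpha\to0$.

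\textbf{The missing idea} is a one-line monotonicity argument. With $\tau=1$ we have $(1-\alpha)a_0=\sigma^{1-\alpha}/\Gamma(1-\alpha)=\sigma\,\omega(\sigma)$. On the other hand, $a_1=\int_0^1\omega(1+\sigma-\mu)\,{\rm d}\mu<\omega(\sigma)$ because $\omega$ is decreasing; equivalently, $(1+\sigma)^{1-\alpha}-\sigma^{1-\alpha}<(1-\alpha)\sigma^{-\alpha}$ by the mean value theorem. Since $2\sigma-1=1-\alpha$, this is exactly $(2\sigma-1)a_0-\sigma a_1>0$. The $b$-terms then need no estimate:
\[
(2\sigma-1)g_0^n-\sigma g_1^n=\big[(2\sigma-1)a_0-\sigma a_1\big]+(3\sigma-1)b_1-\sigma b_2\quad (n\geqslant 2),
\]
with the term $-\sigma b_2$ absent when $n=1$. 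The remainder is positive because $b_1>b_2>0$ (which you proved) and $3\sigma-1>\sigma$.

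\textbf{Two further points.} First, you leave $g_0^n>g_1^n$ unproved. It follows from the same fact: $a_0>\frac{\sigma}{1-\alpha}a_1>a_1$, so $g_0^n-g_1^n=(a_0-a_1)+2b_1-b_2>0$, or $(a_0-a_1)+2b_1$ when $n=1$. It also follows from the third property, since $\sigma/(2\sigma-1)>1$ and $g_1^n>0$.

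Second, your chord argument for $b_k\leqslant\frac1{12}\big(\omega(k-1+\sigma)-\omega(k+\sigma)\big)$ is invalid as stated. The weight $\mu-\frac12$ changes sign, so the pointwise bound ``$f\leqslant$ chord'' does not integrate to the claimed inequality. For a general increasing convex $f$ the inequality is false: $f(\mu)=(\mu-\frac14)_+$ gives $\int_0^1 f(\mu)(\mu-\frac12)\,{\rm d}\mu=\frac{9}{128}>\frac1{16}$. It does hold for this kernel because $\omega'''<0$, but you do not need it. For $n\geqslant2$, $g_{n-1}^n-g_n^n=\big[(a_{n-1}-b_{n-1})-(a_n-b_n)\big]+b_n$, and $a_k-b_k=\int_0^1\omega(k+\sigma-\mu)(\frac32-\mu)\,{\rm d}\mu$ is decreasing in $k$.
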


The following lemma incorporates the truncation error bound resulting from the discretization \eqref{L21sigma} 
of the Caputo derivative, extracted from \cite[Lemma 2]{Alikhanov_JCP15}.

\begin{lemma}
	For $\alpha\in(0,1)$ and $v\in C^3[0,T]$, we have
	\[\vert {}_C{\rm{D}}^{\alpha}_{0,t}v(t_{n+\sigma})-{\delta}^{\alpha}_{n+\sigma}v \vert  
	\leqslant C\tau^{3-\alpha}\max\limits_{t\in [0,t_n]}\vert v'''(t)\vert ,\quad 0\leqslant n\leqslant N-1.\]
\end{lemma}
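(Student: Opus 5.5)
We would prove the bound by writing the truncation error as a single integral of an interpolation remainder against the kernel and then controlling that integral piece by piece. By definition of ${\delta}^{\alpha}_{n+\sigma}v$ in \eqref{L21sigma}, the error is
\[
R^{n+\sigma}:={}_C{\rm{D}}^{\alpha}_{0,t}v(t_{n+\sigma})-{\delta}^{\alpha}_{n+\sigma}v
=\sum_{k=0}^{n-1}\int_{t_k}^{t_{k+1}}\bigl(v-L_{2,k}v\bigr)'(s)\,\omega_{1-\alpha}(t_{n+\sigma}-s)\,{\rm d}s
+\int_{t_n}^{t_{n+\sigma}}\bigl(v-L_{1,n}v\bigr)'(s)\,\omega_{1-\alpha}(t_{n+\sigma}-s)\,{\rm d}s.
\]
The kernel $\omega_{1-\alpha}(t_{n+\sigma}-s)$ is smooth on each history interval $[t_k,t_{k+1}]$ with $k\leqslant n-1$. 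The singularity at $s=t_{n+\sigma}$ occurs only in the last piece. So we integrate by parts on every interval, which moves the derivative off the remainder $e_k:=v-L_{k}v$.

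\textbf{History intervals.} On $[t_k,t_{k+1}]$ with $k\leqslant n-1$, the quadratic interpolant satisfies $e_k(t_k)=e_k(t_{k+1})=0$, so the boundary terms vanish. This gives $\int e_k\,\partial_s\omega_{1-\alpha}(t_{n+\sigma}-s)\,{\rm d}s$. By the standard remainder formula, $e_k(s)=\frac{v'''(\xi)}{6}(s-t_k)(s-t_{k+1})(s-t_{k+2})$, hence $|e_k|\leqslant C\tau^3\max|v'''|$. Since the kernel derivative $\partial_s\omega_{1-\alpha}(t_{n+\sigma}-s)$ has one sign, the sum over $k$ telescopes:
\[
\Bigl|\sum_{k=0}^{n-1}\cdots\Bigr|\leqslant C\tau^3\max|v'''|\bigl(\omega_{1-\alpha}(t_\sigma)-\omega_{1-\alpha}(t_{n+\sigma})\bigr)\leqslant C\tau^3\,\omega_{1-\alpha}(\sigma\tau)\max|v'''|=C\tau^{3-\alpha}\max|v'''|.
\]
This crude estimate already yields the claimed order.

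\textbf{Last subinterval $[t_n,t_{n+\sigma}]$.} This is where the exact choice $\sigma=1-\alpha/2$ must be used. Here $e_n=v-L_{1,n}v$ vanishes at $t_n$ but not at $t_{n+\sigma}$, and the kernel is singular at $t_{n+\sigma}$. Naive integration by parts therefore fails, and a direct bound $|e_n'|\leqslant C\tau\max|v''|$ would only give $O(\tau^{2-\alpha})$.

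Instead, we expand $v$ by Taylor's formula about $t_{n+\sigma}$ up to third order. The linear part of $v$ is reproduced exactly by $L_{1,n}$. The cubic remainder contributes $O(\tau^2\max|v'''|)$ to $e_n'$, and integrating against $\omega_{1-\alpha}$ over an interval of length $\sigma\tau$ gives $O(\tau^{3-\alpha})$. The quadratic term $\frac{v''(t_{n+\sigma})}{2}(s-t_{n+\sigma})^2$ contributes
\[
\frac{v''(t_{n+\sigma})}{2}\int_{t_n}^{t_{n+\sigma}}\Bigl(2(s-t_{n+\sigma})+(1-\sigma)\tau\Bigr)\omega_{1-\alpha}(t_{n+\sigma}-s)\,{\rm d}s .
\]
Here $(1-\sigma)\tau$ comes from subtracting the chord slope of $(s-t_{n+\sigma})^2$ over $[t_n,t_{n+1}]$, namely $(1-2\sigma)\tau$. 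Substituting $\mu=t_{n+\sigma}-s$ turns this into a multiple of $\tau^{2-\alpha}$ times
\[
\Bigl[-\frac{2\sigma^{2-\alpha}}{2-\alpha}+(1-\sigma)\frac{\sigma^{1-\alpha}}{1-\alpha}\Bigr]\frac{1}{\Gamma(1-\alpha)} .
\]
This bracket vanishes exactly when $2\sigma(1-\alpha)=(1-\sigma)(2-\alpha)$, i.e.\ $\sigma=1-\alpha/2$.

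The main obstacle is this cancellation and keeping the bookkeeping consistent between the Taylor expansion point and the interpolation nodes. If the algebra does not close, I would instead follow Alikhanov's original decomposition: write $v=p+\rho$ with $p$ the quadratic Taylor polynomial at $t_{n+\sigma}$, verify that the scheme is exact on the $p$ part at $t_{n+\sigma}$ through the same identity, and bound the $\rho$ part by the two steps above. Collecting the estimates gives the lemma with $\max_{t\in[0,t_{n+1}]}|v'''|$. The stated range $[0,t_n]$ is then read as the paper's loose notation, or obtained by restricting the expansion point appropriately.
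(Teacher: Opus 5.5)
\textbf{A computational error in the one step that carries the lemma.} Your strategy is right, and it is essentially Alikhanov's own proof of \cite[Lemma 2]{Alikhanov_JCP15}; the paper cites that result and gives no proof of its own. On the history intervals you integrate by parts, use the quadratic interpolation remainder, and use a sign-definite kernel derivative whose integral telescopes to $\omega_{1-\alpha}(\sigma\tau)$. That part is correct.

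The last subinterval is where your computation goes wrong.
\begin{itemize}
\item For $q(s)=(s-t_{n+\sigma})^2$, the chord slope over $[t_n,t_{n+1}]$ is $\bigl((1-\sigma)^2-\sigma^2\bigr)\tau=(1-2\sigma)\tau$. Hence $q'(s)-\delta q^n/\tau=2(s-t_{n+\sigma})+(2\sigma-1)\tau$, not $2(s-t_{n+\sigma})+(1-\sigma)\tau$.
\item The condition you derive from your bracket, $2\sigma(1-\alpha)=(1-\sigma)(2-\alpha)$, is equivalent to $\sigma=(2-\alpha)/(4-3\alpha)$, not to $\sigma=1-\alpha/2$. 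The two agree only at $\alpha=2/3$.
\item So, as written, the $v''$-contribution is a nonzero multiple of $v''(t_{n+\sigma})\tau^{2-\alpha}$, and your argument proves only $O(\tau^{2-\alpha})$.
\end{itemize}
With the correct constant, the bracket is $-\frac{2\sigma^{2-\alpha}}{2-\alpha}+(2\sigma-1)\frac{\sigma^{1-\alpha}}{1-\alpha}$. It vanishes iff $2\sigma(1-\alpha)=(2\sigma-1)(2-\alpha)$, i.e.\ exactly when $\sigma=1-\alpha/2$. Equivalently, the L2-$1_\sigma$ formula is exact on quadratics, which is your fallback route. With this correction the proof closes.

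\textbf{The range of the maximum.} Your argument genuinely needs $v'''$ on $(t_n,t_{n+1})$. It enters through $\xi\in(t_{n-1},t_{n+1})$ for $k=n-1$, and through the Taylor remainder on $[t_n,t_{n+\sigma}]$. No choice of expansion point removes this, because the formula uses $v(t_{n+1})$.

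For a concrete failure, take $n=0$ and $v(t)=t^4$. Then $\max_{[0,t_0]}|v'''|=|v'''(0)|=0$, yet the truncation error equals
\[
\frac{\tau^{4-\alpha}}{\Gamma(1-\alpha)}\int_0^{\sigma}\bigl(4(\sigma-x)^3-1\bigr)x^{-\alpha}\,{\rm d}x .
\]
This vanishes only if $3(2-\alpha)^2=(4-\alpha)(3-\alpha)$, which has no root in $(0,1)$, so the error is nonzero. The lemma as printed is therefore false for $n=0$ and must be stated with $\max_{t\in[0,t_{n+1}]}|v'''(t)|$. This is consistent with how the paper states its L1 truncation bounds. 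You should drop the alternative reading that $[0,t_n]$ can be obtained by restricting the expansion point.
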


\begin{lemma}\cite[Corollary 1]{Alikhanov_JCP15}\label{Caputo_dis1}
	For any function $v$ and $n\in\{0,1,\cdots,N-1\}$, one has
	\[(\delta^{\alpha}_{n+\sigma}v,v^{n,\sigma})\geqslant \dfrac{1}{2}\delta^{\alpha}_{n+\sigma}\norm{v}^2.\]
\end{lemma}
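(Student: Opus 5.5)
The plan is to follow Alikhanov's argument. First I rewrite the left-hand side through the coefficient representation \eqref{L21sigma}, and then I reduce the inequality to a positivity statement about a quadratic form, using the coefficient properties of the lemma above.

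Step 1: rewrite the left-hand side. For fixed $x$ (then integrate over $\Omega$), \eqref{L21sigma} gives
\[
(\delta^{\alpha}_{n+\sigma}v,v^{n,\sigma})=\sum_{k=0}^{n}g_k^n(\delta v^{n-k},v^{n,\sigma}).
\]
Since $\norm{v}^2$ is a scalar sequence in $n$, the right-hand side is
\[
\tfrac12\delta^{\alpha}_{n+\sigma}\norm{v}^2=\tfrac12\sum_{k=0}^n g_k^n\bigl(\norm{v^{n-k+1}}^2-\norm{v^{n-k}}^2\bigr).
\]
Pointwise, $\delta v^{n-k}\,\bigl(v^{n-k+1}+v^{n-k}\bigr)=(v^{n-k+1})^2-(v^{n-k})^2$. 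So the difference of the two sides equals
\[
\sum_{k=0}^n g_k^n\Bigl(\delta v^{n-k},\,v^{n,\sigma}-\tfrac12\bigl(v^{n-k+1}+v^{n-k}\bigr)\Bigr).
\]
Write $w_j=\delta v^{j}$. Then
\[
v^{n,\sigma}-\tfrac12\bigl(v^{n-k+1}+v^{n-k}\bigr)=(\sigma-\tfrac12)w_n+\sum_{j=n-k+1}^{n}w_j-\tfrac12 w_{n-k}\ \text{ for } k\geqslant1,
\]
and it equals $(\sigma-\tfrac12)w_n$ for $k=0$. Hence the difference is a quadratic form $Q(w_0,\dots,w_n)$ with coefficients built from $g_k^n$ and $\sigma$. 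It remains to show $Q\geqslant 0$.

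Step 2: positivity of $Q$. I would expand $Q$ as
\[
(\sigma-\tfrac12)g_0^n\norm{w_n}^2+(\sigma-\tfrac12)\sum_{k\geqslant1}g_k^n(w_{n-k},w_n)+\sum_{k\geqslant1}g_k^n\sum_{j>n-k}(w_{n-k},w_j)-\tfrac12\sum_{k\geqslant1}g_k^n\norm{w_{n-k}}^2.
\]
Then I reorganize it using partial sums $S_k=\sum_{j=n-k}^{n}w_j$ and Abel summation with differences $g_{k-1}^n-g_k^n>0$. The term $\sum_{j>n-k}(w_{n-k},w_j)$ combined with $-\tfrac12\norm{w_{n-k}}^2$ telescopes to $\tfrac12\bigl(\norm{S_k}^2-\norm{S_{k-1}}^2\bigr)-\norm{w_{n-k}}^2$ type expressions. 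After summation by parts, $Q$ becomes
\[
\sum_k(g_{k-1}^n-g_k^n)\times(\text{nonnegative squares})+\text{a boundary term in } w_n \text{ and } S_k.
\]
The monotonicity $g_0^n>\cdots>g_n^n>0$ from the lemma controls the bulk sum.

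Step 3, the main obstacle, is the boundary term coupling $w_n$ with the rest, which is where $\sigma\neq\tfrac12$ matters. I would complete the square in $w_n$ and use the condition $(2\sigma-1)g_0^n-\sigma g_1^n>0$ from the lemma to show the leftover coefficient is nonnegative. This is exactly Alikhanov's structure: the condition guarantees that the $2\times2$ form in $(w_n,S)$ with entries involving $(\sigma-\tfrac12)g_0^n$ and $\sigma g_1^n$ is positive semidefinite. Since the statement is cited verbatim as \cite[Corollary 1]{Alikhanov_JCP15}, which already proves the scalar version, an alternative is simpler. Apply that scalar result pointwise in $x$ to $v(x,\cdot)$, giving $\delta^{\alpha}_{n+\sigma}v\cdot v^{n,\sigma}\geqslant\tfrac12\delta^{\alpha}_{n+\sigma}(v^2)$. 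Integrating over $\Omega$, and using linearity of $\delta^{\alpha}_{n+\sigma}$ to exchange it with the integral, yields the claim directly. I would present this reduction as the proof and keep Steps 1–3 as the justification of the scalar inequality.
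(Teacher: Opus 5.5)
Your main argument is correct and matches the paper, which gives no proof of this lemma beyond citing Alikhanov's Corollary 1. You apply Alikhanov's scalar inequality $v^{n,\sigma}\,\delta^{\alpha}_{n+\sigma}v\geqslant\tfrac12\delta^{\alpha}_{n+\sigma}(v^2)$ for a.e.\ fixed $x$, then integrate over $\Omega$, using that $\delta^{\alpha}_{n+\sigma}$ is a finite linear combination of the $v^j$.

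Your Steps 1--3, however, would not justify the scalar inequality as written. The explicit formula in Step 1 is wrong for $k\geqslant1$. Since $v^{n,\sigma}=v^{n+1}-(1-\sigma)w_n$ and $\tfrac12(v^{n-k+1}+v^{n-k})=v^{n-k+1}-\tfrac12 w_{n-k}$, one has
\[
v^{n,\sigma}-\tfrac12\bigl(v^{n-k+1}+v^{n-k}\bigr)=(\sigma-1)w_n+\sum_{j=n-k+1}^{n}w_j+\tfrac12 w_{n-k},
\]
not $(\sigma-\tfrac12)w_n+\sum_{j=n-k+1}^{n}w_j-\tfrac12 w_{n-k}$. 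With your signs, the diagonal terms $-\tfrac12 g_k^n\|w_{n-k}\|^2$ are negative and your form $Q$ is indefinite. For instance, with $n=1$ and $w_1=0$ it equals $-\tfrac12 g_1^1\|w_0\|^2<0$. So no summation by parts in Step 2 can yield $Q\geqslant 0$.

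The clean fix is Alikhanov's own argument. In any inner product space,
\[
\bigl(a-b,\,c-\tfrac12(a+b)\bigr)=\tfrac12\bigl(\|c-b\|^2-\|c-a\|^2\bigr).
\]
Setting $z_j=v^{n,\sigma}-v^j$, the difference of the two sides equals $\tfrac12\sum_{k=0}^{n}g_k^n\bigl(\|z_{n-k}\|^2-\|z_{n-k+1}\|^2\bigr)$. Summation by parts, together with $z_n=\sigma\,\delta v^n$ and $z_{n+1}=-(1-\sigma)\delta v^n$, shows that for $n\geqslant1$ twice the difference equals
\[
g_n^n\|z_0\|^2+\sum_{k=1}^{n-1}\bigl(g_k^n-g_{k+1}^n\bigr)\|z_{n-k}\|^2+\bigl((2\sigma-1)g_0^n-\sigma^2 g_1^n\bigr)\|\delta v^n\|^2 .
\]
Every term is nonnegative: the first two by the monotonicity and positivity of the $g_k^n$, the last since $\sigma^2 g_1^n<\sigma g_1^n$ and $(2\sigma-1)g_0^n-\sigma g_1^n>0$. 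For $n=0$ the expression reduces to $(2\sigma-1)g_0^0\|\delta v^0\|^2\geqslant0$.

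No completion of the square or $2\times2$ form is needed. Because the identity holds directly in $L^2(\Omega)$, this also makes the pointwise reduction unnecessary.
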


The approximation error bound can be calculated utilizing the following result for every function 
$v\in C^2[0,T]$, if $v(t_{n+\sigma})$ is approximated by $v^{n,\sigma}$. Only the uniform mesh case 
from \cite[Lemma 9]{Chen_Stynes_JSc2019} is taken into consideration here.

\begin{lemma}
	For any function $v\in C^2[0,T]$, one has
	\[\vert v^{n,\sigma}-v(t_{n+\sigma})\vert \leqslant \dfrac{1}{8}\tau^2\,
	\max\limits_{s\in(t_n,t_{n+1})}\vert v''(s)\vert ,\quad 1\leqslant n\leqslant N-1.\]	
\end{lemma}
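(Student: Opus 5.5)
The estimate follows from Taylor's theorem with integral remainder, expanding both nodal values $v^n=v(t_n)$ and $v^{n+1}=v(t_{n+1})$ about $t_{n+\sigma}$. Since $v\in C^2[0,T]$,
\[
v(t_{n+1})=v(t_{n+\sigma})+(1-\sigma)\tau\, v'(t_{n+\sigma})+\int_{t_{n+\sigma}}^{t_{n+1}}(t_{n+1}-s)v''(s)\,{\rm d}s,
\]
\[
v(t_{n})=v(t_{n+\sigma})-\sigma\tau\, v'(t_{n+\sigma})+\int_{t_{n}}^{t_{n+\sigma}}(s-t_n)v''(s)\,{\rm d}s.
\]
We form the combination $v^{n,\sigma}=\sigma v^{n+1}+(1-\sigma)v^n$. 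The zeroth-order terms add up to $v(t_{n+\sigma})$. The first-order terms cancel exactly, because $\sigma(1-\sigma)\tau-(1-\sigma)\sigma\tau=0$. This cancellation is the reason for weighting by $\sigma$ and $1-\sigma$. What remains is
\[
v^{n,\sigma}-v(t_{n+\sigma})=\sigma\int_{t_{n+\sigma}}^{t_{n+1}}(t_{n+1}-s)v''(s)\,{\rm d}s+(1-\sigma)\int_{t_n}^{t_{n+\sigma}}(s-t_n)v''(s)\,{\rm d}s .
\]

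Next we bound $|v''|$ by $\max_{s\in(t_n,t_{n+1})}|v''(s)|$; this maximum exists by continuity on the closed interval. The two kernel integrals are $\tfrac12(1-\sigma)^2\tau^2$ and $\tfrac12\sigma^2\tau^2$. The error is therefore at most
\[
\tfrac12\tau^2\bigl[\sigma(1-\sigma)^2+(1-\sigma)\sigma^2\bigr]\max|v''|=\tfrac12\sigma(1-\sigma)\tau^2\max|v''|.
\]
Finally, $\sigma(1-\sigma)\leqslant\tfrac14$ for $\sigma\in(0,1)$, which gives the constant $\tfrac18$.

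Nothing here is a real obstacle. The only care needed is in bookkeeping the signs of the two remainders and computing their kernel integrals correctly. If a sharper statement were wanted, a Peano-kernel argument would show that the combined kernel has one sign. It is worth noting that the restriction $1\leqslant n$ is inessential for this argument: the same computation works for $n=0$, and it holds for any $\sigma\in(0,1)$, not only $\sigma=1-\frac{\alpha}{2}$.
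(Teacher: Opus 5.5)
Your argument is correct. The paper gives no proof of its own for this lemma; it imports the bound from \cite[Lemma 9]{Chen_Stynes_JSc2019}, restricted to a uniform mesh. Your Taylor expansion with integral remainder about $t_{n+\sigma}$ is therefore a self-contained replacement for that citation.

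It is equivalent to the remainder formula for linear interpolation. Since $v^{n,\sigma}=(L_{1,n}v)(t_{n+\sigma})$, one has $v(t_{n+\sigma})-v^{n,\sigma}=\tfrac12 v''(\xi)(t_{n+\sigma}-t_n)(t_{n+\sigma}-t_{n+1})$ for some $\xi\in(t_n,t_{n+1})$. This gives $\tfrac12\sigma(1-\sigma)\tau^2\vert v''(\xi)\vert\leqslant\tfrac18\tau^2\max\vert v''\vert$ at once. Your observation that the combined kernel is nonnegative is exactly what makes this mean-value form available.

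What your version adds is explicitness: the sharper constant $\tfrac12\sigma(1-\sigma)$. You also correctly point out that neither $n\geqslant 1$ nor $\sigma=1-\frac{\alpha}{2}$ is needed under the hypothesis $v\in C^2[0,T]$.

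One cosmetic point: a continuous function need not attain a maximum on the open interval $(t_n,t_{n+1})$. Read the right-hand side as a supremum, which equals $\max_{[t_n,t_{n+1}]}\vert v''\vert$ by continuity; the estimate is unaffected.
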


In the following lemma, described in \cite[Lemma 2.1]{Sun_NMPDE2016}, 
we explain the discretization of the first order derivative and address its accuracy.
\begin{lemma}
	For any function $v\in C^3[0,T]$, we have the following approximation of $v'(t_{n+\sigma})$:
	\begin{equation}\label{fst_dis}
		v'(t_{n+\sigma})\approx\delta_t v^{n,\sigma}=\dfrac{2\sigma+1}{2\tau} \delta v^{n}
		-\dfrac{(2\sigma-1)}{2\tau} \delta v^{n-1},\quad 1\leqslant n\leqslant N-1.
	\end{equation}
	
	In addition, for $1\leqslant n\leqslant N-1$:
	\begin{equation}
		\vert v'(t_{n+\sigma})-\delta_t v^{n,\sigma}\vert\leqslant C\tau^2 \max\limits_{t\in (0,T]} \vert v'''(t)\vert.
	\end{equation}
\end{lemma}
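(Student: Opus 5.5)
The plan is a Taylor expansion about the point $t_{n+\sigma}$. Write $s_0=t_{n+\sigma}$. Then
\[t_{n+1}-s_0=(1-\sigma)\tau,\quad t_n-s_0=-\sigma\tau,\quad t_{n-1}-s_0=-(1+\sigma)\tau .\]
For $1\leqslant n\leqslant N-1$ all three nodes lie in $[0,T]$, so we may expand each of $v^{n+1},v^n,v^{n-1}$ about $s_0$ up to the second-order term, with a remainder in integral (or Lagrange) form involving $v'''$.

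\textbf{First step: verify consistency of the coefficients.} Substituting the expansions into
\[\delta_t v^{n,\sigma}=\frac{2\sigma+1}{2\tau}(v^{n+1}-v^n)-\frac{2\sigma-1}{2\tau}(v^n-v^{n-1})\]
and collecting terms by order gives the following.
\begin{itemize}
\item The $v(s_0)$ terms cancel trivially.
\item The $v'(s_0)$ coefficient is $\frac{2\sigma+1}{2}\cdot 1-\frac{2\sigma-1}{2}\cdot 1=1$.
\item The $v''(s_0)$ coefficient is
\[\frac{\tau}{4}\Big[(2\sigma+1)\big((1-\sigma)^2-\sigma^2\big)-(2\sigma-1)\big(\sigma^2-(1+\sigma)^2\big)\Big].\]
Since $(1-\sigma)^2-\sigma^2=1-2\sigma$ and $\sigma^2-(1+\sigma)^2=-(1+2\sigma)$, this equals $\frac{\tau}{4}\big[(2\sigma+1)(1-2\sigma)+(2\sigma-1)(2\sigma+1)\big]=0$.
\end{itemize}
So the formula is exact on quadratics, and the weights were chosen precisely to achieve this.

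\textbf{Second step: bound the remainder.} Each third-order remainder has the form $\frac{v'''(\xi_j)}{6}(c_j\tau)^3$ with $|c_j|\leqslant 2$. After dividing by $\tau$ and multiplying by the bounded weights $\frac{2\sigma\pm1}{2}$ (note $\sigma\in(1/2,1)$), each contributes at most $C\tau^2\max|v'''|$. The constant $C$ depends only on $\sigma$, hence only on $\alpha$. This gives the stated bound, with the maximum over $(0,T]$, or more precisely over $[t_{n-1},t_{n+1}]$.

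\textbf{Main obstacle.} The only real difficulty is the restriction $n\geqslant 1$: the formula uses $v^{n-1}$, which is unavailable at $n=0$. This is why the statement excludes $n=0$, and that case must be handled separately in the scheme, e.g.\ by a one-step formula such as $\delta v^0/\tau$. Otherwise the argument is the routine cancellation above, and care is needed only with the signs in the $v''$ coefficient.
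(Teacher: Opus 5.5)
Your Taylor-expansion argument is correct: the $v'$ coefficient equals $1$, the $v''$ coefficient vanishes, and the cubic remainders, after division by $\tau$, give $C\tau^2\max|v'''|$. The paper gives no proof of its own here (it only cites Lemma 2.1 of Sun et al.), and this expansion about $t_{n+\sigma}$, equivalently viewing $\delta_t v^{n,\sigma}$ as the derivative at $t_{n+\sigma}$ of the quadratic interpolant of $v$ through $t_{n-1},t_n,t_{n+1}$, is the standard way the result is proved.
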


\begin{remark}\label{approx_dudt}
	For the approximation of $\dfrac{d v(t)}{dt}$ at $t=t_\sigma$, we can use Taylor expansion of $v$ at 
	$t_0$, $t_1$ around $t_\sigma$ and obtain the following approximation:
	\[\dfrac{d v}{dt}\Big\vert_{t_\sigma}\approx \delta_tv^{0,\sigma}+O(\tau^2)
	=\frac{\delta v^0}{\tau}+O(\tau^2).\]
\end{remark}

\begin{lemma}\cite[Lemma 3.5]{Sun_NMPDE2016}\label{delta_t}
	For any function, we have the following inequality:
	\[(\delta_tv^{n,\sigma},v^{n,\sigma})\geqslant \dfrac{1}{4\tau}(E_{n+1}-E_n),\quad 1\leqslant n\leqslant N-1,\]
	where
	\[E_n=(2\sigma+1)\|v^n\|^2-(2\sigma-1)\|v^{n-1}\|^2+ (2\sigma^2+\sigma-1) \|v^n-v^{n-1}\|^2.\]
	Additionally, the following holds
	\[E_n\geqslant \frac{1}{\sigma}\|v^n\|^2, \quad n\geqslant 1.\]
\end{lemma}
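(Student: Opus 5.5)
The plan is to expand the inner product directly, complete squares, and reduce everything to a quadratic form in $\norm{v^n}$ and $\norm{v^n-v^{n-1}}$. Write $a=v^{n+1}$, $b=v^n$, $c=v^{n-1}$, and abbreviate $p=2\sigma+1$, $q=2\sigma-1$. Then $\delta_t v^{n,\sigma}=\frac{1}{2\tau}\big(p(a-b)-q(b-c)\big)$ and $v^{n,\sigma}=\sigma a+(1-\sigma)b$. So $4\tau(\delta_tv^{n,\sigma},v^{n,\sigma})$ equals
\[
2\big(p(a-b)-q(b-c),\,\sigma a+(1-\sigma)b\big).
\]
The goal is to show that this is at least $E_{n+1}-E_n$. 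Here
\[
E_{n+1}-E_n=p\norm{a}^2-(p+q)\norm{b}^2+q\norm{c}^2+\kappa\big(\norm{a-b}^2-\norm{b-c}^2\big),
\]
with $\kappa=2\sigma^2+\sigma-1$. Both sides are quadratic forms in $(a,b,c)$, so the whole argument is bookkeeping.

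First I will rewrite $v^{n,\sigma}=b+\sigma(a-b)$. I will then expand everything using the polarization identities $2(a-b,b)=\norm{a}^2-\norm{b}^2-\norm{a-b}^2$ and $2(b-c,b)=\norm{b}^2-\norm{c}^2+\norm{b-c}^2$. The result is
\[
4\tau(\delta_tv^{n,\sigma},v^{n,\sigma})=p(\norm{a}^2-\norm{b}^2)+(2\sigma p-p)\norm{a-b}^2-q(\norm{b}^2-\norm{c}^2)-q\norm{b-c}^2-2\sigma q(b-c,a-b).
\]
Note that $2\sigma p-p=(2\sigma-1)(2\sigma+1)=4\sigma^2-1$. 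Subtracting $E_{n+1}-E_n$ leaves
\[
(4\sigma^2-1-\kappa)\norm{a-b}^2+(\kappa-q)\norm{b-c}^2-2\sigma q(b-c,a-b).
\]
The coefficients simplify to $4\sigma^2-1-\kappa=2\sigma^2-\sigma=\sigma q$ and $\kappa-q=2\sigma^2-\sigma=\sigma q$. The remainder is therefore exactly
\[
\sigma q\,\norm{(a-b)-(b-c)}^2\ \geq\ 0,
\]
because $\sigma=1-\alpha/2\in(1/2,1)$ gives $q>0$. This proves the first inequality. The main obstacle is only getting these coefficients right; I will check them once more with $\sigma=1/2$, where $q=0$ and the estimate reduces to the standard BDF-free identity.

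For the lower bound on $E_n$, write $x=v^n$ and $y=v^n-v^{n-1}$, so that $v^{n-1}=x-y$. Then
\[
E_n=p\norm{x}^2-q\norm{x-y}^2+\kappa\norm{y}^2=2\norm{x}^2+2q(x,y)+(\kappa-q)\norm{y}^2,
\]
and $\kappa-q=\sigma q$. I will apply Young's inequality in the form $2q(x,y)\geq -\frac{q}{\sigma}\norm{x}^2-\sigma q\norm{y}^2$. This gives
\[
E_n\geq\Big(2-\frac{2\sigma-1}{\sigma}\Big)\norm{x}^2=\frac{1}{\sigma}\norm{v^n}^2,
\]
which is the second claim.
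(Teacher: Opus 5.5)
Your proof is correct. Both leftover coefficients do reduce to $\sigma(2\sigma-1)$, so your computation actually establishes the exact identity
\[
4\tau\,(\delta_t v^{n,\sigma},v^{n,\sigma}) = E_{n+1}-E_n+\sigma(2\sigma-1)\norm{v^{n+1}-2v^n+v^{n-1}}^2 .
\]
Your Young step for $E_n\geqslant \frac{1}{\sigma}\norm{v^n}^2$ is also sharp: equality holds when $v^n-v^{n-1}=-v^n/\sigma$.

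The paper gives no proof of this lemma. It imports the result from the cited reference, so your argument is a self-contained substitute for that citation rather than a competing route.

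The explicit computation gives two things the citation does not.

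\begin{itemize}
\item It shows exactly what is discarded in the inequality: a nonnegative multiple of the squared second difference.
\item It makes visible that both claims rest on $\sigma\geqslant \frac12$. The paper's wording ``for any function'' leaves this implicit. For $0<\sigma<\frac12$ the identity shows the first inequality is strictly violated whenever the second difference is nonzero. The second claim also fails then, since the coefficient $\sigma(2\sigma-1)$ of $\norm{v^n-v^{n-1}}^2$ in your rewritten $E_n$ is negative. With $\sigma=1-\alpha/2\in(\frac12,1)$ everything is fine, as you note.
\end{itemize}

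One small wording point: at $\sigma=\frac12$ the statement reduces to the usual Crank--Nicolson energy identity, not a ``BDF-free'' one.
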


\section{Numerical solution techniques}\label{full_dis_sec}
In this section, we will discuss the numerical methodology for the IBVP \eqref{MIM_Model}. 
Let $f\in C^0\left([0,t];L^2(\Omega)\right)$ and $\phi\in H^2(\Omega)\cap H^1_0(\Omega)$. 
Then, for $t>0,\,\forall v\in H^1_0(\Omega),$ the variational formulation for the IBVP 
\eqref{MIM_Model} can be described as
\begin{equation}\label{variation_form}
	\left\{
	\begin{array}{lll}
		\lambda_1\left(\frac{\partial u}{\partial t},v \right)+\lambda_2\left({}_C{\rm D}^\alpha_{0,t}u,v\right)
		+\gamma_1\left(\frac{\partial u}{\partial x},\frac{\partial v}{\partial x}\right)+\gamma_2(u,v)=(f,v),\\
		(u(\cdot,0),v)=(\phi,v).
	\end{array}
	\right.
\end{equation}

A solution $u$ of the IBVP \eqref{MIM_Model} will be called weak solution, if
\[u\in C^1([0,T];L^2(\Omega))\cap AC([0,T];L^2(\Omega))\cap L^2([0,T];H^1_0(\Omega))\]
and $u$ satisfies the variational formulation \eqref{variation_form}. Moreover, the weak solution $u$ of 
the IBVP \eqref{MIM_Model} will be called a strong solution, if
\[u\in C^1([0,T];L^2(\Omega))\cap AC([0,T];L^2(\Omega))\cap C^0([0,T];H^2(\Omega))\]
and satisfies \eqref{MIM_Model} point-wise.

In the following subsections, we primarily focus on finding a weak solution for the IBVP \eqref{MIM_Model} 
using a special type of non-conforming finite element method (NIPG method) along with the CN-L1 and L2-$1_\sigma$ formulas.

\subsection{Semi-discrete scheme based on NIPG}

Approximating the solution $u(\cdot,t)\in H^{1} (\Omega,\mathcal{P}_M)$ by $u_h(t)\equiv u_h(\cdot,t)\in V^k_{0,h}$, 
for all $t\geqslant0$. Here, $u_h(\cdot,t)$ is referred as the solution for the semi-discrete scheme presented next. 
By applying NIPG method for the spatial variables in IBVP \eqref{MIM_Model}, we obtain the following semi-discrete 
scheme: for each $t\in (0,T]$ and for all $v_h\in V^k_{0,h}$, we have to find $u_h(t)\in V^k_{0,h}$ such that
\begin{equation}\label{semi_dis_scheme}
	\lambda_1\left(\frac{\partial u_h(t)}{\partial t},v_h \right)
	+\lambda_2\left({}_C{\rm D}^\alpha_{0,t}u_h(t),v_h\right)+B_h(u_h(t),v_h)
	=(f(\cdot,t),v_h),
\end{equation}
where for any $w_h,v_h\in V^k_{0,h}$,
\begin{equation}\label{B_h}
	\left\{
	\begin{array}{lll}
		B_h(w_h,v_h) = A_h(w_h,v_h)+J_h(w_h,v_h),\\	
		A_h(w_h,v_h)=\gamma_1 \left(\dfrac{\partial w_h}{\partial x},\dfrac{\partial v_h}{\partial x}\right) 
		+\gamma_2  (w_h,v_h)  +\gamma_1 \left(\sum\limits_{m=0}^{M} \left\{\dfrac{\partial w_h}{\partial x}\right\}_m\left[v_h\right]_m\right.\\
		\left.\hspace{2.5cm}- \sum\limits_{m=0}^{M}  \left\{\dfrac{\partial v_h}{\partial x}\right\}_m\left[w_h\right]_m\right),\\
		J_h(w_h,v_h)= \sum\limits_{m=0}^{M} \dfrac{\varsigma_m}{h}\left[w_h\right]_m\left[v_h\right]_m.
	\end{array}
	\right.	
\end{equation}

Here, $A_h$ is derived from the elliptic operator $\mathcal{L}$ which is non-symmetric in nature; $J_h$ is called 
the penalty term which added to confirm the stability and convergence of the scheme 
(see, \cite[Section 1.2]{Beatrice_book_DGM}), and $\varsigma_m>0$ is called penalty parameter for each $m$. 
This is why the DG method is referred to as the non-symmetric interior penalty DG method.

Now, let us define the energy norm $\verti{\cdot}$, which will be useful for later analyses. For any $v_h\in V^k_{0,h}$,
\begin{equation}\label{B_coercivity}
	B_h(v_h,v_h)=\gamma_1\vert v_h \vert _1^2+\gamma_2\|v_h\|^2
	+\sum\limits_{m=0}^{M} \dfrac{\varsigma_m}{h}\left[v_h(x_m)\right]^2:=\verti{v_h}^2,
\end{equation}
which confirms the coercivity and continuity of the bilinear form $B_h$ with the norm $\verti{\cdot}$.

Using \eqref{FES}, we can expand the semi-discrete solution as
\begin{equation}\label{semi_dis_sol}
	u_h(x,t)=\sum_{m=0}^{M-1}\sum_{i=0}^{k}\beta_i^m(t)\Psi_i^m(x),\quad\forall x\in \Omega,\,t\in (0,T].
\end{equation}
The degrees of freedom $\beta_i^m$ are time-dependent function. The degrees of freedom and basis functions can be renamed as
\begin{eqnarray*}
	&&\{\Psi_i^m:0\leqslant m\leqslant M-1,\, 0\leqslant i\leqslant k\}=\{\tilde{\Psi}_j:1\leqslant j\leqslant M (k+1)	\},\\
	&&\{\beta_i^m:0\leqslant m\leqslant M-1,\, 0\leqslant i\leqslant k\}=\{\tilde{\beta}_j: 1\leqslant j\leqslant M (k+1)\}.
\end{eqnarray*}

By inserting \eqref{semi_dis_sol} into \eqref{semi_dis_scheme}, a subsequent system of differential equations with the vector of unknown $\mathbf{ \tilde{\beta}}=\{\tilde{\beta}_j\}$ is obtained:
\[\lambda_1\mathbf{M}\,\frac{\partial \mathbf{ \tilde{\beta}}}{\partial t}(t)
+\lambda_2\mathbf{M}\,{}_C{\rm D}^\alpha_{0,t}\mathbf{ \tilde{\beta}}(t)
+\mathbf{B}\,\mathbf{ \tilde{\beta}}(t)=\mathbf{F}(t),\]
where $\mathbf{M}=(\mathcal{M}_{ij})$ is mass matrix, $\mathbf{B}=(\mathcal{B}_{ij})$ is stiffness matrix, and
\[\mathcal{M}_{ij}=(\tilde{\Psi}_j,\tilde{\Psi}_i),\quad\mathcal{B}_{ij}=B_h(\tilde{\Psi}_j,\tilde{\Psi}_i),
\quad \mathbf{F}(t)=(f(\cdot,t),\tilde{\Psi}_j),\quad \, 1\leqslant i,j\leqslant M(k+1).\]

Taking idea from \cite[Section 3.3.2]{Beatrice_book_DGM} 
, we define elliptic projection $\mathbf{P}: H^1(\Omega,\mathcal{P}_M)\rightarrow V^k_{0,h}$, which is defined as follows:
for each $v\in H^1(\Omega,\mathcal{P}_M)$, $\mathbf{P}v\in V^k_{0,h}$ is the unique polynomial satisfying
\begin{equation}\label{Galerkin_orth}
	B_h(\mathbf{P}v-v,w_h) =0,\quad \forall w_h\in V^k_{0,h}.
\end{equation}

In essence, the following lemma summarizes the projection error bound, which comes from the result \cite[Theorem 2.6]{Beatrice_book_DGM}.
\begin{lemma}\label{projerr_bd}
	Let $K$ be any element and $v\in H^s(K)\,,s\geqslant 1$ and $\mathbf{P}v\in V^k_{0,h}\,\,k\geqslant 0$. Then we have the following estimates:
	\begin{equation*}
		\|\mathbf{P}v-v\|_{H^q(K)}\leqslant Ch^{\min{(k+1,s)}-q}\vert v\vert_{H^s(K)},\quad 0\leqslant q\leqslant s,
	\end{equation*}
	where the positive constant $C$ is independent of $h$.
\end{lemma}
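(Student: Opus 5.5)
The argument I have in mind is the standard one for interior penalty methods: a Céa-type quasi-optimality bound for $\mathbf{P}$, followed by local polynomial approximation on each element. I will treat $q=0,1$ only. In one dimension with broken norms this is all the lemma is used for, and the case of general $q$ then follows element by element through inverse-type and scaling arguments. Write $\eta=v-\Pi_h v$, where $\Pi_h v\in V^k_{0,h}$ is an elementwise interpolant. For definiteness, take on each $K_m$ the polynomial that interpolates $v$ at the two endpoints $x_m,x_{m+1}$ and matches its moments against $\mathbb{P}^{k-2}(K_m)$. This gives $[\eta]_m=0$ at interior nodes whenever $v$ is continuous, and $\eta=0$ at $x_0,x_M$. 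By the Bramble--Hilbert lemma combined with scaling from the reference element, $\|\eta\|_{H^q(K)}\leqslant Ch^{\min(k+1,s)-q}|v|_{H^s(K)}$, and the trace inequality $|w(x_m^\pm)|^2\leqslant C(h^{-1}\|w\|_K^2+h|w|_{H^1(K)}^2)$ gives $|\{\partial_x\eta\}_m|\leqslant Ch^{\min(k+1,s)-3/2}|v|_{H^s}$ on the neighbouring elements.

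Next comes the energy estimate. Put $\xi=\mathbf{P}v-\Pi_h v\in V^k_{0,h}$. Galerkin orthogonality \eqref{Galerkin_orth} and coercivity \eqref{B_coercivity} give $\verti{\xi}^2=B_h(\xi,\xi)=B_h(\eta,\xi)$. I then bound $B_h(\eta,\xi)$ term by term. The volume terms are at most $C\|\eta\|_1\verti{\xi}$. The jump terms of $\eta$ vanish because $\eta$ is continuous. The remaining consistency term $\sum_m\{\partial_x\eta\}_m[\xi]_m$ is handled by Cauchy--Schwarz against $\sum_m(\varsigma_m/h)[\xi]_m^2$, which gives at most $C h^{1/2}\big(\sum_m\{\partial_x\eta\}_m^2\big)^{1/2}\verti{\xi}$. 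Altogether $\verti{\xi}\leqslant Ch^{\min(k+1,s)-1}|v|_{H^s}$. Since $|\cdot|_1\leqslant C\verti{\cdot}$, the triangle inequality yields the case $q=1$.

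For $q=0$ I use a duality (Aubin--Nitsche) argument. Let $z\in H^2\cap H^1_0$ solve the adjoint problem $-\gamma_1 z''+\gamma_2 z=e:=v-\mathbf{P}v$. In one dimension we have full regularity, $\|z\|_2\leqslant C\|e\|$. Because $B_h$ is non-symmetric, the adjoint consistency identity reads $B_h(w,z)=(w,\mathcal{L}z)-2\gamma_1\sum_m\{z'\}_m[w]_m$. In other words NIPG is not adjoint consistent, so the usual argument loses half a power. I therefore expect to get only $\|e\|\leqslant Ch^{\min(k+1,s)}$ when $k$ is odd or under extra superpenalization; in general the bound comes out as $Ch^{\min(k+1,s)-1/2}$ unless one exploits the one-dimensional structure. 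This is the main obstacle. To get around it I would first try to use the nodal structure in 1D. Choosing the interpolant $\Pi_h z$ to be nodally exact, and noting that $[e]_m$ is controlled through the penalty, the term $\sum_m\{z'\}_m[e]_m$ should be bounded by $C\|z\|_2\,h\verti{e}$ rather than $h^{1/2}$. If that route fails, I would fall back on citing \cite[Theorem 2.6]{Beatrice_book_DGM} directly for the $L^2$ estimate, as the paper does.
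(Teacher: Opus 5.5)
\textbf{The gap is the case $q=0$.} Your $q=1$ argument is fine (for $s\geqslant2$, so that $\{\partial_x\eta\}_m$ is defined), but only as a \emph{global} broken-norm bound. The $q=0$ case is the one the paper actually uses: \eqref{proj_err_bd} and \eqref{chi_bd} need $h^{k+1}$ for $\mathbf{P}u-u$, $\mathbf{P}u_t-u_t$, \dots

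You identify the obstruction correctly, but you misjudge its size, and your remedy does not act on it. Since $z\in H^2(\Omega)$, we have $\{z'\}_m=z'(x_m)$, so no interpolant of $z$ appears in $\sum_m\{z'\}_m[e]_m$. A good $\Pi_hz$ only helps with $B_h(e,z-\Pi_hz)$, and that part is already $O(h^{\min(k+1,s)})\norm{e}$. The only information you have on the jumps is $\sum_m\frac{\varsigma_m}{h}[e]_m^2\leqslant\verti{e}^2$, which gives
\[
\Big|\sum_m z'(x_m)[e]_m\Big|\leqslant\|z'\|_{L^\infty(\Omega)}\Big(\sum_m\frac{h}{\varsigma_m}\Big)^{1/2}\verti{e}\leqslant C\norm{e}\,\verti{e}.
\]
So duality returns only $\norm{e}\leqslant Ch^{\min(k+1,s)-1}$. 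A full order is lost, not half; you get nothing beyond the energy estimate.

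This is not a weakness of the technique. Because NIPG is consistent, $\mathbf{P}v$ is exactly the NIPG solution of $\mathcal{L}w=\mathcal{L}v$. In one dimension, with penalty $\varsigma_m/h$, that solution is known to lose one order in $L^2$ for even $k$. The $q=0$ bound is therefore false for general $k$. For odd $k$ on a uniform mesh, in particular $k=1$ (consistent with the tables in Section~\ref{Num_exp_sec}), the optimal order does hold. Proving it, however, requires a genuinely one-dimensional superconvergence argument: that the jumps $[\mathbf{P}v]_m$ are smaller than the energy bound predicts, or cancel in $\sum_m z'(x_m)[e]_m$. Alternatively one needs a penalty growing like a negative power of $h$. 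Your proposal supplies neither.

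Falling back on the paper does not help either. The paper gives no argument beyond citing Theorem~2.6 of Rivi\`ere's book, which is a local approximation statement: it asserts the existence of \emph{some} $\tilde v\in\mathbb{P}^k(K)$ with this accuracy. It is not a property of the elliptic projection; the lemma simply transfers it to $\mathbf{P}$.

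\textbf{Locality is a second, independent problem.} The statement is elementwise, with only $|v|_{H^s(K)}$ on the right. No C\'ea or duality argument can yield that for the global operator $\mathbf{P}$. If $v$ agrees with a polynomial of degree $\leqslant k$ on one element $K$ but not elsewhere, the right-hand side vanishes, while $(\mathbf{P}v-v)|_K\neq0$ in general, because the discrete problem couples all elements (already through $\gamma_2>0$ and the flux and penalty terms).

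Your remark that general $q$ follows ``element by element through inverse-type and scaling arguments'' does not repair this. An inverse estimate on $K$ needs a local bound on $\norm{\mathbf{P}v-\Pi_hv}_{K}$, and you only have such bounds after summing over all elements.

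What your route can honestly deliver is the global broken-norm estimate: for $q=1$ in general, and for $q=0$ only for odd $k$ on uniform meshes with the additional one-dimensional argument described above. That global form is also all the paper's error analysis uses.
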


\subsection{Fully discrete scheme}

Now, we will use two distinct discretizations for the time derivatives in \eqref{semi_dis_scheme}. 
We will proceed to the fully discrete schemes in the following subsections.

\subsubsection{CN-L1-NIPG fully discrete Scheme}

Using Crank-Nicolson L1 discretization for time derivatives in the semi-discrete scheme \eqref{semi_dis_scheme}, 
we obtain the following fully discrete scheme:
Find $u_h^n\in V_{0,h}^k,0\leqslant n\leqslant N-1$ such that for all $ \forall v_h\in V_{0,h}^k,$
\begin{equation}\label{MIM_ful_dis_L1}
	\lambda_1\left(\dfrac{\delta u_h^{n}}{\tau},v_h\right) +\lambda_2 \left({}_{L1}{\rm{D}}^{\alpha}_{n+\frac{1}{2}}u_h,v_h\right) 
	+B_h \left(u_h^{n+\frac{1}{2}},v_h\right)  =\left(f^{n+\frac{1}{2}},v_h\right),\,
\end{equation}
where $B_h$ is defined in \eqref{B_h}.

By simplifying, we obtain the following explicit form,
\begin{equation}\label{MIM_Fuldisl1}
	\left\{
	\begin{array}{lll}
		\dfrac{\lambda_1}{\tau} \left(u_h^{1}-u_h^{0},v_h\right) +\dfrac{\lambda_2}{2}d_1 (u_h^{1}-u_h^0,v_h)  
		+B_h(u_h^{\frac{1}{2}},v_h)=\left(f^{\frac{1}{2}},v_h\right) ,\\
		\dfrac{\lambda_1}{\tau} \left(u_h^{n+1}-u_h^{n},v_h\right) +\lambda_2d_1 (u_h^{n+\frac{1}{2}},v_h)  
		+B_h(u_h^{n+\frac{1}{2}},v_h)=\left(f^{n+\frac{1}{2}},v_h\right) \\
		+\lambda_2 \displaystyle\sum\limits_{k=1}^{n}(d_{k}-d_{k+1})\,\left(u_h^{n-k+\frac{1}{2}},v_h\right)
		+\lambda_2d_{n+1}(u_h^0,v_h) ,\quad 1\leqslant n\leqslant N-1.
	\end{array}
	\right.
\end{equation}

In the following theorems, we have presented the numerical stability and error analysis for 
the CN-L1-NIPG fully discrete scheme \eqref{MIM_Fuldisl1}.

\begin{theorem}\label{Stab_L1}
	The solution $u^{n}_h$ of the fully discrete scheme \eqref{MIM_ful_dis_L1} satisfies the following inequality:
	\begin{equation*}
		\|u_h^{n+1}\|^2 \leqslant \left(1+\frac{\lambda_2}{\lambda_1}\omega_{2-\alpha}(t_{n+1})\right)\left\|u_h^0\right\|^2 
		+\dfrac{\tau}{2\lambda_1\gamma_2}\sum\limits_{ k=0}^{n}\left\|f^{k+\frac{1}{2}}\right\|^2,\quad 0\leqslant n\leqslant N-1.
	\end{equation*}
\end{theorem}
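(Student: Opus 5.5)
The plan is an energy argument: test the scheme with $v_h=u_h^{n+\frac12}$ at each step, then sum over the time levels. With this choice $\lambda_1(\delta u_h^n/\tau,u_h^{n+\frac12})=\frac{\lambda_1}{2\tau}(\|u_h^{n+1}\|^2-\|u_h^n\|^2)$. By \eqref{B_coercivity}, $B_h(u_h^{n+\frac12},u_h^{n+\frac12})=\verti{u_h^{n+\frac12}}^2\geqslant\gamma_2\|u_h^{n+\frac12}\|^2$. For the source term, Cauchy--Schwarz and Young's inequality give
\[
(f^{n+\frac12},u_h^{n+\frac12})\leqslant \gamma_2\|u_h^{n+\frac12}\|^2+\frac{1}{4\gamma_2}\|f^{n+\frac12}\|^2 .
\]
The $\gamma_2$ terms then cancel against the coercive part. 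After multiplying by $2\tau/\lambda_1$ we obtain
\[
\|u_h^{n+1}\|^2-\|u_h^n\|^2+\frac{2\tau\lambda_2}{\lambda_1}\big({}_{L1}{\rm D}^\alpha_{n+\frac12}u_h,u_h^{n+\frac12}\big)\leqslant \frac{\tau}{2\lambda_1\gamma_2}\|f^{n+\frac12}\|^2 .
\]

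The fractional term is the heart of the proof. I would rewrite ${}_{L1}{\rm D}^\alpha_{n+\frac12}u_h$ as in \eqref{MIM_Fuldisl1}, namely
\[
d_1u_h^{n+\frac12}-\sum_{k=1}^n(d_k-d_{k+1})u_h^{n-k+\frac12}-d_{n+1}u_h^0 ,
\]
with the convention $u_h^{-1/2}$ absent. For $n=0$ this becomes $\frac{d_1}{2}(u_h^1-u_h^0)$. Then I pair each term with $u_h^{n+\frac12}$ and apply $ab\leqslant\frac12(a^2+b^2)$, using $d_k-d_{k+1}\geqslant0$. Since the coefficients telescope, $d_1-\sum_{k=1}^n(d_k-d_{k+1})-d_{n+1}=0$, and this yields
\[
\big({}_{L1}{\rm D}^\alpha_{n+\frac12}u_h,u_h^{n+\frac12}\big)\geqslant \frac12\Big(\sum_{k=0}^{n}d_{n-k+1}\ldots\Big),
\]
that is, the standard telescoping form
\[
\frac12\Big(d_1\|u_h^{n+\frac12}\|^2-\sum_{k=1}^n(d_k-d_{k+1})\|u_h^{n-k+\frac12}\|^2\Big)-\frac{d_{n+1}}{2}\|u_h^0\|^2 .
\]
To make this telescope after summing in $n$, I define the discrete fractional energy
\[
F^n=\sum_{j=0}^{n} d_{n-j+1}\|u_h^{j+\frac12}\|^2\cdot(\text{appropriate})
\]
and check that $F^{n}-F^{n-1}$ matches the bound above, up to the $u_h^0$ term. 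The cleanest route may be to avoid the half-level energies and work directly with the sum over $n$ of the cross terms.

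Alternatively, and perhaps more simply, I would sum the inequality over $n=0,\dots,m$ first. I would then show that the total fractional contribution $\sum_{n=0}^m\tau({}_{L1}{\rm D}^\alpha_{n+\frac12}u_h,u_h^{n+\frac12})$ is bounded below by $-\frac{\tau}{2}\sum_{n=0}^m d_{n+1}\|u_h^0\|^2$, with all nonnegative memory pieces dropped. The key identity is $\tau\sum_{n=0}^{m}d_{n+1}=\tau^{1-\alpha}\omega_{2-\alpha}(m+1)=\omega_{2-\alpha}(t_{m+1})$. Multiplying by $2\lambda_2/\lambda_1$, the $u_h^0$ contribution becomes $\frac{\lambda_2}{\lambda_1}\omega_{2-\alpha}(t_{m+1})\|u_h^0\|^2$, which together with the telescoped $\|u_h^{m+1}\|^2-\|u_h^0\|^2$ gives exactly the stated bound. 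The main obstacle is verifying that the remaining memory terms are nonnegative. This relies on the positive-definiteness of the L1 kernel, i.e., monotone, positive $d_k$, applied to the half-step averages. I would handle it by the elementwise Young argument above, with careful bookkeeping of the coefficients $d_k-d_{k+1}$ and of the first-step asymmetry $d_1/2$.
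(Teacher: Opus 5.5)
Your proposal is correct and follows essentially the same route as the paper: test with $u_h^{n+\frac12}$, absorb the source term via Young's inequality against $\gamma_2\|u_h^{n+\frac12}\|^2$, split the memory cross terms with $ab\leqslant\frac12(a^2+b^2)$ using $d_k\geqslant d_{k+1}>0$, telescope, and finish with $\tau\sum_{k=1}^{n+1}d_k=\omega_{2-\alpha}(t_{n+1})$. Your tentative energy needs no extra ``appropriate'' factor: $\sum_{j=0}^{n}d_{n-j+1}\|u_h^{j+\frac12}\|^2$ is exactly the memory part of the paper's $y^{n+1}=\frac{\lambda_1}{\tau}\|u_h^{n+1}\|^2+\lambda_2\sum_{k=1}^{n+1}d_k\|u_h^{n-k+\frac32}\|^2$, so its nonnegativity is immediate, and the only cosmetic difference is that the paper handles the step $n=0$ separately instead of through your uniform formula.
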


\begin{proof}
	Let us consider $v_h=2 u_h^{\frac{1}{2}}=u_h^1+u_h^0$ and $v_h=2 u_h^{n+\frac{1}{2}}=u_h^{n}+u_h^{n+1}$ for first and 
	second equations of \eqref{MIM_Fuldisl1}, respectively. Then we obtain the following equalities
	\begin{eqnarray*}
		&& \dfrac{\lambda_1}{\tau} \left(\|u_h^{1}\|^2-\|u_h^{0}\|^2\right)+2\lambda_2d_1 \left(u_h^{\frac{1}{2}}-u_h^0, 
		u_h^{\frac{1}{2}}\right)+2B_h(u_h^{\frac{1}{2}},u_h^{\frac{1}{2}})=2(f^{\frac{1}{2}},u_h^{\frac{1}{2}}) ,\\
		&& \dfrac{\lambda_1}{\tau}\left(\|u_h^{n+1}\|^2-\|u_h^{n}\|^2\right)+2\lambda_2d_1 \left\|u_h^{n+\frac{1}{2}}\right\|^2 
		+2B_h(u_h^{n+\frac{1}{2}},u_h^{n+\frac{1}{2}})=2(f^{n+\frac{1}{2}},u_h^{n+\frac{1}{2}}) \nonumber\\
		&&+2\lambda_2 \sum\limits_{k=1}^{n}(d_{k}-d_{k+1})\,\left(u_h^{n-k+\frac{1}{2}},u_h^{n+\frac{1}{2}}\right)
		+2\lambda_2d_{n+1}(u_h^0,u_h^{n+\frac{1}{2}}).
	\end{eqnarray*}
	
	Then, by using Cauchy-Schwartz's and Young's inequalities and \eqref{B_coercivity}, we have
	\begin{eqnarray}
		&&\hspace{2cm} \dfrac{\lambda_1}{\tau} \left(\|u_h^{1}\|^2-\|u_h^{0}\|^2\right)
		+\lambda_2d_1 \left\|u_h^{\frac{1}{2}}\right\|^2 +2  \verti{u_h^{\frac{1}{2}}}^2\nonumber\\
		&&\hspace{2.3cm}\leqslant \dfrac{1}{2\gamma_2}\left\|f^{\frac{1}{2}}\right\|^2
		+2\gamma_2\,\left\|u_h^{\frac{1}{2}}\right\|^2 +\lambda_2d_1 \left\|u_h^{0}\right\|^2,\label{ineq1}\\
		&&\hspace{-.5cm}\text{and}\nonumber\\
		&&\hspace{-.5cm} \dfrac{\lambda_1}{\tau}\left(\|u_h^{n+1}\|^2-\|u_h^{n}\|^2\right)
		+2\lambda_2d_1 \left\|u_h^{n+\frac{1}{2}}\right\|^2 +2\verti{u_h^{n+\frac{1}{2}}}^2\leqslant 
		\dfrac{1}{2\gamma_2}\left\|f^{n+\frac{1}{2}}\right\|^2+2\gamma_2\,\left\|u_h^{n+\frac{1}{2}} \right\|^2\nonumber\\
		&&\hspace{-.2cm} +\lambda_2 \sum\limits_{k=1}^{n}(d_{k}-d_{k+1})\,\left(\left\|u_h^{n-k+\frac{1}{2}}\right\|^2
		+\left\|u_h^{n+\frac{1}{2}}\right\|^2\right)+\lambda_2d_{n+1}\left(\left\|u_h^0\right\|^2
		+\left\|u_h^{n+\frac{1}{2}}\right\|^2\right)\label{ineq2},
	\end{eqnarray}
	where we have used $d_{k}\geqslant d_{k+1}>0$.
	
	From \eqref{ineq1}, we get
	\begin{eqnarray}
		&&\lambda_1\|u_h^{1}\|^2+\lambda_2d_1\tau \left\|u_h^{\frac{1}{2}}\right\|^2\leqslant 
		\left(\lambda_1+\lambda_2d_1\tau\right)\|u^0_h\|^2+\dfrac{\tau}{2\gamma_2}\left\|f^{\frac{1}{2}}\right\|^2\label{y1},\\
		&&\hspace{-1cm}\text{i.e.,}\nonumber\\
		&&\|u^1_h\|^2\leqslant\left(1+\frac{\lambda_2}{\lambda_1}\omega_{2-\alpha}(t_1)\right)\|u^0_h\|^2
		+\dfrac{\tau}{2\lambda_1\gamma_2}\left\|f^{\frac{1}{2}}\right\|^2\label{u1}.
	\end{eqnarray}
	
	Now, by simplifying \eqref{ineq2}, we get
	\begin{eqnarray*}
		&& \dfrac{\lambda_1}{\tau}\|u_h^{n+1}\|^2+\lambda_2d_1 \left\|u_h^{n+\frac{1}{2}}\right\|^2
		+\lambda_2\sum\limits_{k=2}^{n+1}d_{k}\left\|u_h^{n-k+\frac{3}{2}}\right\|^2\nonumber\\
		&&\leqslant  \dfrac{\lambda_1}{\tau}\|u_h^{n}\|^2+\lambda_2\sum\limits_{ k=1}^{n}d_{ k}
		\left\|u_h^{n- k+\frac{1}{2}}\right\|^2+\dfrac{1}{2\gamma_2}\left\|f^{n+\frac{1}{2}}\right\|^2 +\lambda_2d_{n+1}\left\|u_h^0\right\|^2.
	\end{eqnarray*}
	By letting $y^{n}= \dfrac{\lambda_1}{\tau}\|u_h^{n}\|^2+\lambda_2\displaystyle\sum\limits_{k=1}^{n}d_{ k}
	\left\|u_h^{n- k+\frac{1}{2}}\right\|^2$ for $n\geqslant 1$, the above inequality becomes
	\begin{eqnarray}\label{y_n}
		y^{n+1}&\leqslant& y^{n}+\dfrac{1}{2\gamma_2}\left\|f^{n+\frac{1}{2}}\right\|^2 
		+\lambda_2d_{n+1}\left\|u_h^0\right\|^2\nonumber\\
		&\leqslant& y^{n-1}+\dfrac{1}{2\gamma_2}\left(\left\|f^{n-\frac{1}{2}}\right\|^2
		+\left\|f^{n+\frac{1}{2}}\right\|^2\right) +\lambda_2(d_{n+1}+d_{n})\left\|u_h^0\right\|^2\nonumber\\
		&\leqslant& y^1+\dfrac{1}{2\gamma_2}\sum\limits_{ k=1}^{n}\left\|f^{k+\frac{1}{2}}\right\|^2
		+\lambda_2\left(\sum\limits_{ k=2}^{n+1}d_{ k}\right)\left\|u_h^0\right\|^2\nonumber\\
		&\leqslant&\left(\dfrac{\lambda_1}{\tau}+\lambda_2\sum\limits_{ k=1}^{n+1}d_{ k}\right)\left\|u_h^0\right\|^2
		+ \dfrac{1}{2\gamma_2}\sum\limits_{ k=0}^{n}\left\|f^{k+\frac{1}{2}}\right\|^2,
	\end{eqnarray}
	where we have used \eqref{y1} for the last inequality.
	
	Therefore, multiplying $\frac{\tau}{\lambda_1}$ on both sides of the inequality \eqref{y_n}, and using the 
	fact that $\tau\sum\limits_{ k=1}^{n}d_{ k}=\omega_{2-\alpha}(t_n)$ and $d_k\geqslant 0$, we finally reach at
	\begin{equation}\label{un+1}
		\|u_h^{n+1}\|^2 \leqslant \left(1+\frac{\lambda_2}{\lambda_1}\omega_{2-\alpha}(t_{n+1})\right)\left\|u_h^0\right\|^2 
		+\dfrac{\tau}{2\lambda_1\gamma_2}\sum\limits_{ k=0}^{n}\left\|f^{k+\frac{1}{2}}\right\|^2.
	\end{equation}
	Hence, by combining \eqref{u1} and \eqref{un+1}, the required proof is complete.
\end{proof}

In the next part, we will discuss the error analysis due to fully discretization of 
\eqref{MIM_Model} using CN-L1-NIPG scheme \eqref{MIM_Fuldisl1}.

\begin{theorem}\label{err_L1}
	Let $u(\cdot,t_n)$ be the weak solution for the IBVP \eqref{MIM_Model} and $u_h^n$ be the numerical solution for 
	the corresponding CN-L1-NIPG fully discrete scheme \eqref{MIM_ful_dis_L1}. Then, for $0\leqslant n\leqslant N$, 
	we have the following error estimate:
	\begin{eqnarray*}
		&&\norm{u(\cdot,t_n)-u_h^n}\\
		&&\leqslant C \max\limits_{t\in[0,t_{n+1}]} \Big(\tau^{2-\alpha} \left(\norm{u_{tt}} 
		+\norm{u_{ttt}}\right)+h^{k+1}\left( \vert u\vert _{k+1}+\vert  u_t\vert _{k+1} 
		+\vert {}_C{\rm{D}}^{\alpha}_{0,t}u\vert _{k+1} \right)\Big).
	\end{eqnarray*}
\end{theorem}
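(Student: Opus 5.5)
We split the error in the standard way using the elliptic projection $\mathbf{P}$ of \eqref{Galerkin_orth}:
\[
u(\cdot,t_n)-u_h^n=\big(u(\cdot,t_n)-\mathbf{P}u(\cdot,t_n)\big)+\big(\mathbf{P}u(\cdot,t_n)-u_h^n\big)=:\eta^n+\theta^n,
\]
with $\theta^n\in V^k_{0,h}$. The term $\eta^n$ is handled directly by Lemma \ref{projerr_bd} with $q=0$ and $s=k+1$, which gives $\norm{\eta^n}\leqslant Ch^{k+1}\vert u(\cdot,t_n)\vert_{k+1}$. All the work is therefore in bounding $\theta^n$.

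To do this, we evaluate the variational formulation \eqref{variation_form} at the half levels. We average the equations at $t_n$ and $t_{n+1}$ (or evaluate at $t_{n+\frac12}$ and use the truncation estimates). We insert the CN-L1 approximations, so that
\[
\lambda_1\Big(\tfrac{\delta u^n}{\tau},v_h\Big)+\lambda_2\big({}_{L1}{\rm D}^\alpha_{n+\frac12}u,v_h\big)+B_h\big(u^{n+\frac12},v_h\big)=\big(f^{n+\frac12},v_h\big)+\big(R^{n+\frac12},v_h\big).
\]
Here $B_h(u,v_h)$ agrees with the continuous form because $u$ is smooth and has no jumps. The residual $R^{n+\frac12}$ collects $\lambda_1 r_2^{n+\frac12}+\lambda_2 r_1^{n+\frac12}$ together with the $O(\tau^2)$ discrepancies between $u_t$ and ${}_C{\rm D}^\alpha_{0,t}u$ at $t_{n+\frac12}$ and their nodal averages. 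By the CN-L1 truncation bounds, $\norm{R^{n+\frac12}}\leqslant C\tau^{2-\alpha}\max(\norm{u_{tt}}+\norm{u_{ttt}})$.

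We then subtract the scheme \eqref{MIM_ful_dis_L1} and use Galerkin orthogonality $B_h(\eta,v_h)=0$. This shows that $\theta$ satisfies exactly the scheme \eqref{MIM_ful_dis_L1} with right-hand side
\[
f^{n+\frac12}\;\longmapsto\;g^{n+\frac12}:=R^{n+\frac12}-\lambda_1\frac{\delta\eta^n}{\tau}-\lambda_2\,{}_{L1}{\rm D}^\alpha_{n+\frac12}\eta,
\]
and with initial value $\theta^0=\mathbf{P}\phi-u_h^0$. We choose $u_h^0=\mathbf{P}\phi$, so $\theta^0=0$; otherwise $\norm{\theta^0}=O(h^{k+1})$.

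Because the scheme is linear, Theorem \ref{Stab_L1} applies verbatim to $\theta$. It yields
\[
\norm{\theta^{n+1}}^2\leqslant C\norm{\theta^0}^2+\frac{\tau}{2\lambda_1\gamma_2}\sum_{k=0}^{n}\norm{g^{k+\frac12}}^2\leqslant C\max_{k}\norm{g^{k+\frac12}}^2,
\]
since $\tau(n+1)\leqslant T$. It remains to bound the two projection pieces of $g$. Since $\mathbf{P}$ commutes with time differencing, $\delta\eta^n/\tau=\frac1\tau\int_{t_n}^{t_{n+1}}(I-\mathbf{P})u_t\,{\rm d}t$, and Lemma \ref{projerr_bd} gives a bound by $Ch^{k+1}\max\vert u_t\vert_{k+1}$. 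For the fractional piece we write ${}_{L1}{\rm D}^\alpha_{n+\frac12}\eta=(I-\mathbf{P})\,{}_{C}{\rm D}^\alpha_{0,t}u(t_{n+\frac12})+(I-\mathbf{P})(\text{L1 truncation error of }u)$. The first part is bounded by $Ch^{k+1}\vert{}_C{\rm D}^\alpha_{0,t}u\vert_{k+1}$. The second part is of higher order, $O(\tau^{2-\alpha}h)$ times norms of $u_{tt}$, and is absorbed into the stated terms.

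Finally, we combine the bounds on $\eta^n$ and $\theta^n$ by the triangle inequality and take square roots, which gives the claimed estimate.

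The main obstacle is the treatment of ${}_{L1}{\rm D}^\alpha_{n+\frac12}\eta$. A crude bound through the coefficients $d_k$ loses a factor $\tau^{-\alpha}$, or requires $\vert u_t\vert_{k+1}$ together with $\sum_k d_k\tau$. The route I would try first is the one above: interchange $\mathbf{P}$ with the discrete operator and compare against the continuous Caputo derivative, so that only $\vert{}_C{\rm D}^\alpha_{0,t}u\vert_{k+1}$ appears. A fallback is to use $\tau\sum_k d_k=\omega_{2-\alpha}(t_{n+1})\leqslant C$, which bounds it by $Ch^{k+1}\max\vert u_t\vert_{k+1}$.

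A secondary check is that applying Theorem \ref{Stab_L1} needs $\gamma_2>0$ to absorb the $L^2$ term, and this holds by assumption.
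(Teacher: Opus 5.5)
Your proposal is correct and follows essentially the same route as the paper: the elliptic-projection split, Galerkin orthogonality \eqref{Galerkin_orth} to remove $B_h(\eta^{n+\frac12},v_h)$, Theorem \ref{Stab_L1} applied to the discrete error with the modified source (the paper's $\chi_1^n$, your $g^{n+\frac12}$), and Lemma \ref{projerr_bd} for the projection pieces. You are in fact more careful than the paper on two points: the initial error $\theta^0$, which the paper tacitly takes to be zero, and the term $(I-\mathbf{P})$ applied to the L1 truncation error, which the paper simply lumps into $2\norm{R_1^n}$; for the latter, your fallback $\norm{{}_{L1}{\rm D}^\alpha_{n+\frac12}\eta}\leqslant \omega_{2-\alpha}(t_{n+1})\max\norm{\eta_t}\leqslant Ch^{k+1}\max\vert u_t\vert_{k+1}$ is the cleanest rigorous choice, since your ``absorbed'' claim would otherwise need $\vert u_{tt}\vert_1$ rather than $\norm{u_{tt}}$.
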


\begin{proof}
	Let $e_h^n:=u(\cdot,t_n)-u_h^n=:\xi_{1,h}^n-\eta_1^n$, for $0\leqslant n\leqslant N$, where
	$\eta_1^n:=\mathbf{P}u(\cdot,n)-u(\cdot,t_n)$ be the projection error and 
	$\xi_{1,h}^n:=\mathbf{P}u(\cdot,t_n)-u_h^n$ be the discretization error.
	
	From Lemma \ref{projerr_bd}, we have
	\begin{equation}\label{proj_err_bd}
		\norm{\eta_1^n}=\norm{\mathbb{P}u(\cdot,t_n)-u(\cdot,t_n)}
		\leqslant Ch^{k+1}\vert u(\cdot,t_{n})\vert _{k+1},\quad n\geqslant0.
	\end{equation}
	
	Now, $e_h^n$ satisfies the following equation for all $v_h\in V^k_{0,h}$:
	\begin{equation}\label{erreqn_L1}
		\lambda_1\left(\dfrac{\delta e_h^{n}}{\tau},v_h\right) 
		+\lambda_2 \left({}_{L1}{\rm{D}}^{\alpha}_{n+\frac{1}{2}}e_h,v_h\right) 
		+B_h \left(e_h^{n+\frac{1}{2}},v_h\right) =\left(R_1^n,v_h\right),
	\end{equation}
	where $R_1^n=\lambda_1\left(\frac{\delta u^{n}}{\tau}-u_t(\cdot,t_{n+\frac{1}{2}}) \right) 
	+\lambda_2\left({}_{L1}{\rm{D}}^{\alpha}_{n+\frac{1}{2}}u- {}_C{\rm{D}}^{\alpha}_{0,t}u(\cdot,t_{n+\frac{1}{2}})\right)$.
	
	Taking $v_h=\xi_{1,h}^{n+\frac{1}{2}}$ in \eqref{erreqn_L1} and simplifying, we have
	\begin{eqnarray}\label{erreqn1_L1}
		&&\lambda_1\left(\dfrac{\delta \xi_{1,h}^{n}}{\tau},\xi_{1,h}^{n+\frac{1}{2}}\right) 
		+\lambda_2 \left({}_{L1}{\rm{D}}^{\alpha}_{n+\frac{1}{2}}\,\xi_{1,h},\xi_{1,h}^{n+\frac{1}{2}}\right) 
		+B_h \left(\xi_{1,h}^{n+\frac{1}{2}},\xi_{1,h}^{n+\frac{1}{2}}\right)  \nonumber\\
		&&= \lambda_1\left(\dfrac{\delta \eta_{1}^{n}}{\tau},\xi_{1,h}^{n+\frac{1}{2}}\right)
		+\lambda_2 \left({}_{L1}{\rm{D}}^{\alpha}_{n+\frac{1}{2}}\,\eta_{1},\xi_{1,h}^{n+\frac{1}{2}}\right)
		+\left(R_1^n,\xi_{1,h}^{n+\frac{1}{2}}\right) +B_h \left(\eta_{1}^{n+\frac{1}{2}},\xi_{1,h}^{n+\frac{1}{2}}\right)\nonumber\\
		&&=\left(\chi_1^n,\xi_{1,h}^{n+\frac{1}{2}}\right),
	\end{eqnarray}
	where we have used \eqref{Galerkin_orth} and denoted $\chi_1^n:=\lambda_1\dfrac{\delta \eta_{1}^{n}}{\tau}
	+\lambda_2{}_{L1}{\rm{D}}^{\alpha}_{n+\frac{1}{2}}\,\eta_{1}+R_1^n$.
	
	From the definitions of $\eta_1^n$, $R_1^n$ and Lemma \ref{projerr_bd}, we get
	\begin{eqnarray}\label{chi_bd}
		&&\norm{\chi_1^n}\nonumber\\
		&&\leqslant \norm{\lambda_1\dfrac{\delta \eta_{1}^{n}}{\tau}+\lambda_2{}_{L1}
			{\rm{D}}^{\alpha}_{n+\frac{1}{2}}\,\eta_{1}}+\norm{R_1^n}\nonumber\\
		&&\leqslant \norm{\lambda_1\frac{\partial\eta_1}{\partial t}(\cdot,t_{n+\frac{1}{2}})
			+\lambda_2 {}_C{\rm{D}}^{\alpha}_{0,t}\eta_1(\cdot,t_{n+\frac{1}{2}})}+2\norm{R_1^n}\nonumber\\
		&&\leqslant \lambda_1\max\limits_{t\in (0,t_{n+1}]}\norm{\mathbf{P}(u_t)-u_t}+ 
		\lambda_2\max\limits_{t\in (0,t_{n+1}]}\norm{\mathbf{P}({}_C{\rm{D}}^{\alpha}_{0,t}u)
			-{}_C{\rm{D}}^{\alpha}_{0,t}u}+2\norm{
			R_1^n}\nonumber\\
		&&\hspace{-1cm} \leqslant Ch^{k+1}\max\limits_{t\in (0,t_{n+1}]}\left(\vert  u_t\vert _{k+1} 
		+\vert {}_C{\rm{D}}^{\alpha}_{0,t}u\vert _{k+1} \right)+C\tau^{2-\alpha}
		\max\limits_{t\in (0,t_{n+1}]}\left(\norm{u_{tt}}    +\norm{ u_{ttt}} \right).
	\end{eqnarray}
	Using Theorem \ref{Stab_L1} and \eqref{chi_bd} in \eqref{erreqn1_L1}, we arrive at
	\begin{eqnarray}\label{dis_er_bd}
		&&\norm{\xi_{1,h}^{n+1}}\leqslant\left( \frac{\tau}{2\lambda_1\gamma_2}
		\sum_{k=0}^{n}\norm{\chi_1^k}^2\right)^{\frac{1}{2}}\nonumber\\
		&&\hspace{-1cm}\leqslant Ch^{k+1}\max\limits_{t\in (0,t_{n+1}]}
		\left(\vert  u_t\vert _{k+1} +\vert {}_C{\rm{D}}^{\alpha}_{0,t}u\vert _{k+1} \right)
		+C\tau^{2-\alpha}\max\limits_{t\in (0,t_{n+1}]}\left(\norm{u_{tt}}    +\norm{ u_{ttt}} \right).
	\end{eqnarray}
	Hence, from \eqref{proj_err_bd} and \eqref{dis_er_bd}, we have obtained our required result.
\end{proof}

\subsubsection{L2-$1_\sigma$-NIPG fully discrete Scheme}

Now, by applying the discrete formulas \eqref{L21sigma} and \eqref{fst_dis} for the temporal 
derivative discretization in the semi-discrete problem \eqref{semi_dis_scheme} and denoting 
temporal approximation of $u_h(t_{n+\sigma})$ as $U_h^{n,\sigma}$, we obtain the following fully 
discrete scheme: Find $U_h^n\in V^k_{0,h}, 0\leqslant n\leqslant N-1$ such that
\begin{equation}\label{MIM_fuldis1}
	\lambda_1 \left(\delta_t U_h^{n,\sigma},v_h\right) +\lambda_2 \left( \delta^\alpha_{n+\sigma} U_h,v_h\right) 
	+ B_h(U_h^{n,\sigma},v_h)=(f^{n+\sigma},v_h),\quad\forall v_h \in V^k_{0,h}.
\end{equation}

In the next discussion, we will discuss the numerical stability and convergence analysis for the 
fully discrete scheme \eqref{MIM_fuldis1}.

\begin{theorem}\label{L21sigma_stab_thm}
	Let $U_h^n$ be solution of the fully discrete scheme \eqref{MIM_fuldis1}. Then, we have the following 
	numerical stability for the numerical scheme \eqref{MIM_fuldis1}:
	\begin{eqnarray*}\label{Uh_bd}
		\norm{U_h^1}^2 &\leqslant&  \left(1+\frac{\lambda_2 }{\lambda_1}\omega_{2-\alpha}(t_\sigma) \right)\norm{U_h^0}^2
		+\dfrac{\tau}{2\lambda_1\gamma_2}\norm{f^\sigma}^2,\nonumber\\
		\norm{U^{n+1}_h}^2 &\leqslant&  \left[3+\frac{\lambda_2}{\lambda_1}\omega_{2-\alpha}(t_{n+\sigma})
		\left(5+\frac{\lambda_2}{\lambda_1}\omega_{2-\alpha}(t_\sigma)\right)\right]\norm{U_h^0}^2 \nonumber\\
		&&+\dfrac{2\tau}{\lambda_1\gamma_2}\left[\left(1+\frac{\lambda_2}{4\lambda_1}\omega_{2-\alpha}(t_{n+\sigma})\right)
		\norm{f^\sigma}^2+\sum_{i=1}^{n} \norm{f^{i+\sigma}}^2\right],
	\end{eqnarray*}
	where $\sigma=1-\frac{\alpha}{2}\in (\frac{1}{2},1)$, and $1\leqslant n\leqslant N-1$.
\end{theorem}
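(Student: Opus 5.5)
The plan is to test the scheme \eqref{MIM_fuldis1} with $v_h=2U_h^{n,\sigma}$ and to separate the first step $n=0$ from the steps $n\geqslant1$, in the same way as in the proof of Theorem \ref{Stab_L1}.

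\emph{Step $n=0$.} By Remark \ref{approx_dudt}, $\delta_tU_h^{0,\sigma}=\delta U_h^0/\tau$. We also have $\delta^\alpha_{\sigma}U_h=a_0\,\delta U_h^0$ with $a_0\tau=\omega_{2-\alpha}(t_\sigma)$. Write $U_h^{0,\sigma}=\sigma U_h^1+(1-\sigma)U_h^0$. A direct computation, using $\sigma\geqslant\frac12$, gives
\[2(U_h^1-U_h^0,U_h^{0,\sigma})\geqslant\|U_h^1\|^2-\|U_h^0\|^2 .\]
Lemma \ref{Caputo_dis1} gives $2(\delta^\alpha_\sigma U_h,U_h^{0,\sigma})\geqslant a_0(\|U_h^1\|^2-\|U_h^0\|^2)$, and this is $\geqslant -a_0\|U_h^0\|^2$ after discarding the positive term. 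For the source we use $2B_h(U_h^{0,\sigma},U_h^{0,\sigma})=2\verti{U_h^{0,\sigma}}^2\geqslant2\gamma_2\|U_h^{0,\sigma}\|^2$ together with Young's inequality,
\[2(f^\sigma,U_h^{0,\sigma})\leqslant\frac1{2\gamma_2}\|f^\sigma\|^2+2\gamma_2\|U_h^{0,\sigma}\|^2 .\]
Multiplying by $\tau/\lambda_1$ then yields the first inequality, with the coefficient $1+\frac{\lambda_2}{\lambda_1}\omega_{2-\alpha}(t_\sigma)$.

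\emph{Steps $n\geqslant1$.} Lemma \ref{delta_t} gives $2\lambda_1(\delta_tU_h^{n,\sigma},U_h^{n,\sigma})\geqslant\frac{\lambda_1}{2\tau}(E_{n+1}-E_n)$. For the fractional term, Lemma \ref{Caputo_dis1} gives
\[2\lambda_2(\delta^\alpha_{n+\sigma}U_h,U_h^{n,\sigma})\geqslant\lambda_2\,\delta^\alpha_{n+\sigma}\|U_h\|^2=\lambda_2\sum_{k=0}^n g_k^n\big(\|U_h^{n-k+1}\|^2-\|U_h^{n-k}\|^2\big).\]
The source is absorbed by $\verti{\cdot}$ exactly as in the first step. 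The sequence $\|U_h^j\|^2$ does not telescope in $n$ by itself, so I would move the fractional term to the right-hand side. Using Abel summation and the monotonicity of $g_k^n$ from Alikhanov's coefficient lemma, it becomes
\[-g_0^n\|U_h^{n+1}\|^2+\sum_{k=1}^{n}(g_{k-1}^n-g_k^n)\|U_h^{n-k+1}\|^2+g_n^n\|U_h^0\|^2 ,\]
in which all coefficients are nonnegative. Dropping $-g_0^n\|U_h^{n+1}\|^2$, and using $\tau\sum_k g_k^n\leqslant C\,\omega_{2-\alpha}(t_{n+\sigma})$ (these telescoping sums are bounded by the $a_k$ sums), the step inequality takes the form
\[E_{n+1}\leqslant E_n+\frac{2\tau}{\lambda_1\gamma_2}\|f^{n+\sigma}\|^2+\frac{\lambda_2}{\lambda_1}\,\tau\Big(\text{weighted }\max_{j\leqslant n}\|U_h^j\|^2\Big).\]
I would close this by induction on the hypothesis $\max_{j\leqslant n}\|U_h^j\|^2\leqslant$ the right-hand side of the claim, in the style of the $y^n$ argument of Theorem \ref{Stab_L1}. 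The base value $E_1=(2\sigma+1)\|U_h^1\|^2-(2\sigma-1)\|U_h^0\|^2+(2\sigma^2+\sigma-1)\|U_h^1-U_h^0\|^2$ is then bounded using the first estimate. Summing over the steps and finally applying $\|U_h^{n+1}\|^2\leqslant\sigma E_{n+1}\leqslant E_{n+1}$ gives the stated bound.

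The constants in the claim arise as follows. The factor $3$ comes from $(2\sigma+1)+(2\sigma^2+\sigma-1)\cdot2\leqslant 3+\dots$ after expanding $\|U_h^1-U_h^0\|^2\leqslant2\|U_h^1\|^2+2\|U_h^0\|^2$, with $\sigma<1$. The product term $\omega_{2-\alpha}(t_{n+\sigma})\,\omega_{2-\alpha}(t_\sigma)$ comes from inserting the $\|U_h^1\|^2$ bound into the history weight.

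\emph{Main obstacle.} The delicate step is controlling the history term. Lemma \ref{Caputo_dis1} only gives the form $\frac12\delta^\alpha_{n+\sigma}\|v\|^2$, and the weights $g_k^n$ depend on $n$, so there is no clean telescoping $y^n$ as in the L1 case. I would first attempt the crude route: drop the negative term, bound the rest by $\tau^{-1}\omega_{2-\alpha}(t_{n+\sigma})\max_j\|U_h^j\|^2$-type quantities, and obtain the exact constants $3$, $5$ and $\frac14$ from careful bookkeeping with the $n=0$ estimate. If this fails to close without Gr\"onwall, the fallback is a discrete fractional Gr\"onwall argument with generic constants.
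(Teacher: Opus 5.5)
Your first step ($n=0$) is fine and agrees with the paper. The gap is in the steps $n\geqslant1$, exactly at the obstacle you flag, and the crude route cannot work. After your Abel summation, the retained history terms have total weight $\sum_{k=1}^{n}(g_{k-1}^n-g_k^n)+g_n^n=g_0^n=a_0+b_1\geqslant a_0=\tau^{-\alpha}\sigma^{1-\alpha}/\Gamma(2-\alpha)$.

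Once $-g_0^n\|U_h^{n+1}\|^2$ is discarded, each step therefore adds roughly $\frac{\lambda_2}{\lambda_1}\tau^{1-\alpha}\max_{j\leqslant n}\|U_h^j\|^2$ to $E_{n+1}$. That is not $\tau$ times an $O(1)$ weight. Over $n$ steps it accumulates to a coefficient of order $\frac{\lambda_2}{\lambda_1}t_n\tau^{-\alpha}$. The identity $\tau\sum_k g_k^n=\omega_{2-\alpha}(t_{n+\sigma})$ controls only one time level; summed over levels it grows like $n$.

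Neither your induction nor a classical Gr\"onwall argument can then give $\tau$-independent constants; Gr\"onwall yields $\exp(Ct_n\tau^{-\alpha})$. The loss is intrinsic. If $\|U_h^j\|^2$ is constant in $j$, the exact term $\delta^\alpha_{n+\sigma}\|U_h\|^2$ vanishes, yet your bound still charges $g_0^n\|U_h\|^2\sim\tau^{-\alpha}$ per step. A discrete fractional Gr\"onwall lemma would need the fractional difference kept on the left, which is exactly the structure you dropped.

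The missing idea is that, contrary to your remark, the history \emph{does} telescope. Because $g_k^n=g_k^{n-1}$ for $0\leqslant k\leqslant n-2$, $g_{n-1}^n=g_{n-1}^{n-1}+b_n$, and $g_n^n=a_n-b_n$, one has exactly
\[
\sum_{k=0}^{n}g_k^n\|U_h^{n-k}\|^2=\sum_{k=0}^{n-1}g_k^{n-1}\|U_h^{n-k}\|^2+b_n\|U_h^1\|^2+(a_n-b_n)\|U_h^0\|^2 .
\]
So the paper's modified energy $\widehat E_n=\frac{\lambda_1}{4\tau}E_n+\frac{\lambda_2}{2}\sum_{k=0}^{n-1}g_k^{n-1}\|U_h^{n-k}\|^2$ satisfies
\[
\widehat E_{n+1}\leqslant\widehat E_n+\frac{\lambda_2}{2}\big(b_n\|U_h^1\|^2+(a_n-b_n)\|U_h^0\|^2\big)+\frac{1}{4\gamma_2}\|f^{n+\sigma}\|^2 .
\]
Summing, only $\sum_i b_i<\frac{\alpha}{2\tau(2-\alpha)}\omega_{2-\alpha}(t_{n+\sigma})$ and $\sum_{i=0}^n a_i=\frac1\tau\omega_{2-\alpha}(t_{n+\sigma})$ appear. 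They multiply $\|U_h^1\|^2$ and $\|U_h^0\|^2$ alone, and become $O(1)$ after multiplying by $4\tau/\lambda_1$. No maximum, induction or Gr\"onwall step is needed.

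Your bookkeeping of the constants is also off. Since $(2\sigma+1)+2(2\sigma^2+\sigma-1)=4\sigma^2+4\sigma-1$ tends to $7$ as $\sigma\to1$, bounding $E_1$ through the first estimate and $\|U_h^1-U_h^0\|^2\leqslant2\|U_h^1\|^2+2\|U_h^0\|^2$ does not give the factor $3$. In the paper, $3$ comes from $1+2\sigma<3$. This works because the $\|U_h^1\|^2$ and $\|U_h^1-U_h^0\|^2$ parts of $\frac14E_1$ are absorbed by $2\sigma$ times the full first-step energy inequality. That inequality retains the $(2\sigma-1)\|U_h^1-U_h^0\|^2$ term and the $a_0\|U_h^1\|^2$ term, both of which you discarded.
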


\begin{proof}
	By letting $v_h=U_h^{0,\sigma}$ in \eqref{MIM_fuldis1} for $n=0$, we obtain the following equation
	\begin{equation}\label{MIMstab1}
		\dfrac{\lambda_1 }{\tau}(\delta U_h^0,U_h^{0,\sigma}) +\lambda_2(\delta^{\alpha}_{\sigma}U_h,U_h^{0,\sigma}) 
		+  B_h(U_h^{0,\sigma},U_h^{0,\sigma}) =(f^\sigma,U_h^{0,\sigma}).
	\end{equation}	
	Now, using the identities $(c-d)c=\frac{1}{2}[c^2-d^2+(c-d)^2],\, (c-d)d=\frac{1}{2}[c^2-d^2-(c-d)^2]$, we have
	\begin{eqnarray}\label{MIMstab2}
		&&\hspace{-.8cm}(\delta U_h^0,U_h^{0,\sigma})=\frac{\sigma}{2}\left[\norm{U_h^1}^2-\norm{U^0_h}^2+\norm{U_h^1-U_h^0}^2\right]
		+\frac{1-\sigma}{2}\left[\norm{U_h^1}^2-\norm{U^0_h}^2-\norm{U_h^1-U_h^0}^2\right]\nonumber\\
		&&\hspace{.9cm}=\frac{1}{2}\left[\norm{U_h^1}^2-\norm{U^0_h}^2+(2\sigma-1)\norm{U_h^1-U_h^0}^2\right].
	\end{eqnarray}
	
	Applying Cauchy-Schwartz's and Young's inequalities, Lemma \ref{Caputo_dis1}, \eqref{B_coercivity}, and 
	\eqref{MIMstab2}, it follows from \eqref{MIMstab1} that
	\begin{eqnarray}\label{md_U1}
		&&\dfrac{\lambda_1}{2\tau} \left[\norm{U_h^1}^2-\norm{U^0_h}^2+(2\sigma-1)\norm{U_h^1-U_h^0}^2\right] 
		+\frac{\lambda_2}{2}a_0\left(\norm{U_h^1}^2-\norm{U^0_h}^2 \right)+ \gamma_1\vert U_h^{0,\sigma}\vert_1^2\nonumber\\
		&&\leqslant \dfrac{1}{4\gamma_2}\norm{f^\sigma}^2,
	\end{eqnarray}
	which yields
	\begin{equation}\label{U1_bd}
		\norm{U_h^1}^2 \leqslant  \left(1+\frac{\lambda_2 }{\lambda_1}\omega_{2-\alpha}(t_\sigma) \right)\norm{U_h^0}^2
		+\dfrac{\tau}{2\lambda_1\gamma_2}\norm{f^\sigma}^2.
	\end{equation}	
	Again, by taking $v_h=U_h^{n,\sigma}$ in \eqref{MIM_fuldis1} for $1\leqslant n\leqslant N-1$, we have
	\begin{equation}\label{MIM_stab3}
		\lambda_1 \left(\delta_t U_h^{n,\sigma}, U_h^{n,\sigma}\right)+\lambda_2 (\delta^{\alpha}_{n+\sigma} U_h,U_h^{n,\sigma})
		+B_h(U_h^{n,\sigma},U_h^{n,\sigma})=(f^{n+\sigma},U_h^{n,\sigma}).
	\end{equation}
	Considering Lemma \ref{delta_t}, Lemma \ref{Caputo_dis1}, and \eqref{B_coercivity}, and then using 
	Cauchy-Schwartz's and Young's inequalities, \eqref{MIM_stab3} implies that
	\begin{equation}\label{MIM_stab4}
		\frac{\lambda_1}{4\tau}(E_{n+1}-E_n)+\frac{\lambda_2}{2}\delta^\alpha_{n+\sigma}\norm{U_h}^2+\verti{U_h^{n,\sigma}}^2
		\leqslant \dfrac{1}{4\gamma_2}\norm{f^{n+\sigma}}^2+\gamma_2\norm{U_h^{n,\sigma}}^2.
	\end{equation}
	Now, by simplifying \eqref{MIM_stab4} and using \eqref{L21sigma}-\eqref{gnk}, we have
	\begin{eqnarray}\label{E_eqn}
		&&\frac{\lambda_1}{4\tau}E_{n+1}+\frac{\lambda_2}{2}\sum\limits_{k=0}^{n}g_k^n\norm{U_h^{n+1-k}}^2\nonumber\\
		&&\hspace{-1.3cm}\leqslant
		\frac{\lambda_1}{4\tau}E_{n}+\frac{\lambda_2}{2}\sum\limits_{k=0}^{n-1}g_k^{n-1}\norm{U_h^{n-k}}^2
		+\frac{\lambda_2}{2}\left(b_{n}\norm{U^1_h}^2+(a_n-b_n)\norm{U^0_h}^2\right)+\dfrac{1}{4\gamma_2}\norm{f^{n+\sigma}}^2,
	\end{eqnarray}
	where we have used the fact that
	$g_k^n=
	\begin{cases}
		g_k^{n-1},& 0\leqslant k\leqslant n-2,\\
		g_{n-1}^{n-1}+b_n, & k=n-1.	
	\end{cases}
	$

	Letting $\widehat{E}_n=\dfrac{\lambda_1}{4\tau}E_{n}+\dfrac{\lambda_2}{2} 
	\sum\limits_{k=0}^{n-1}g_k^{n-1}\norm{U_h^{n-k}}^2,\,n\geqslant 1$, replacing $n$ by $i$, 
	and taking sum for $i=1$ to $n$ in \eqref{E_eqn}, we arrived at
	\begin{eqnarray}\label{En+1}
		&&\widehat{E}_{n+1}\leqslant \widehat{E}_{1}+\frac{\lambda_2}{2}\sum_{i=1}^{n}\left[b_{i}\norm{U^1_h}^2
		+(a_i-b_i)\norm{U^0_h}^2 \right] +\dfrac{1}{4\gamma_2}\sum_{i=1}^{n} \norm{f^{i+\sigma}}^2\nonumber\\
		&&\hspace{-.8cm} \leqslant\left(\dfrac{\lambda_1}{4\tau}E_1+\frac{\lambda_2}{2}a_0\norm{U^1_h}^2 \right)
		+\frac{\lambda_2}{2}\left(\sum_{i=1}^{n} b_{i}\right)\norm{U^1_h}^2+ 
		\frac{\lambda_2}{2}\sum_{i=1}^{n}(a_i-b_i)\norm{U^0_h}^2 +\dfrac{1}{4\gamma_2}\sum_{i=1}^{n} \norm{f^{i+\sigma}}^2\nonumber\\
		&&\hspace{-.7cm}\leqslant\left\{\dfrac{\lambda_1}{\tau}\left[\sigma\norm{U_h^1}^2-\sigma\norm{U^0_h}^2
		+(2\sigma^2-\sigma)\norm{U_h^1-U_h^0}^2 \right] +\frac{\lambda_2}{2}a_0\norm{U_h^1}^2\right\}
		+\frac{\lambda_1}{4\tau}(1+2\sigma)\norm{U_h^0}^2 \nonumber\\
		&& +\frac{\lambda_2}{2}\left(\sum_{i=1}^{n} b_{i}\right)\norm{U^1_h}^2+ \frac{\lambda_2}{2}\sum_{i=1}^{n}
		(a_i-b_i)\norm{U^0_h}^2 +\dfrac{1}{4\gamma_2}\sum_{i=1}^{n} \norm{f^{i+\sigma}}^2\nonumber\\
		&&\hspace{-.7cm} \leqslant\frac{\lambda_2}{2}\left(\sum_{i=1}^{n} b_{i}\right)\norm{U^1_h}^2+
		\left(\lambda_2a_0 +\frac{\lambda_1}{4\tau}(1+2\sigma) + \frac{\lambda_2}{2}\sum_{i=1}^{n}(a_i-b_i)\right)
		\norm{U^0_h}^2+\dfrac{1}{2\gamma_2}\sum_{i=0}^{n} \norm{f^{i+\sigma}}^2\nonumber\\
		&&\hspace{-.7cm} \leqslant\frac{\lambda_2}{2}\left(\sum_{i=1}^{n} b_{i}\right)\norm{U^1_h}^2
		+\left(\frac{\lambda_1 }{4\tau} (1+2\sigma)+ \lambda_2\sum_{i=0}^{n}a_i\right)\norm{U^0_h}^2+
		\dfrac{1}{2\gamma_2}\sum_{i=0}^{n} \norm{f^{i+\sigma}}^2,	
	\end{eqnarray}
	where we have used $\sigma\in(\frac{1}{2},1)$, \eqref{md_U1} and the fact that
	\begin{eqnarray*}
		&&\frac{1}{4}E_1-\left[\sigma\norm{U_h^1}^2+\frac{1-2\sigma}{4}\norm{U_h^0}^2
		+(2\sigma^2-\sigma)\norm{U_h^1-U_h^0}^2 \right]\\
		&&=\left(\frac{2\sigma+1}{4}-\sigma\right)\norm{U_h^1}^2-\left(\frac{2\sigma-1}{4}
		+\frac{1-2\sigma}{4}\right)\norm{U_h^0}^2\nonumber\\
		&&\hspace{1cm}+\left(\frac{2\sigma^2+
			\sigma-1}{4}-(2\sigma^2-\sigma)\right)\norm{U_h^1-U_h^0}^2\\
		&&=\frac{1-2\sigma}{4}\norm{U_h^1}^2+
		\frac{1}{4}(1-3\sigma)(2\sigma-1)\norm{U_h^1-U_h^0}^2\\
		&&\leqslant0,
	\end{eqnarray*}
	for the third inequality.
	
	Now, by a simple calculation, we have
	\begin{eqnarray}
		&&\hspace{-.2cm}\sum_{i=1}^{n}b_i<\frac{\alpha}{2\tau(2-\alpha)}\sum_{i=1}^{n}
		\left[\omega_{2-\alpha}(t_{i+\sigma})-\omega_{2-\alpha}(t_{i-1+\sigma})\right]\leqslant 
		\frac{\alpha}{2\tau(2-\alpha)}\omega_{2-\alpha}(t_{n+\sigma}),\label{sum_b}\\
		&&\hspace{-1cm}\text{and}\nonumber\\
		&&\hspace{-.2cm}\sum_{i=0}^{n}a_i\leqslant \frac{1}{\tau}\omega_{2-\alpha}(t_{n+\sigma}) \label{sum_a}.
	\end{eqnarray}

	Therefore, using Lemma \ref{delta_t}, \eqref{U1_bd}, \eqref{sum_b}-\eqref{sum_a} and the fact that 
	$\alpha\in(0,1),\sigma\in(\frac{1}{2},1)$, \eqref{En+1} becomes
	\begin{eqnarray*}
		&&\norm{U^{n+1}_h}^2 \\
		&\leqslant& \left(\frac{\lambda_2}{\lambda_1}\omega_{2-\alpha}(t_{n+\sigma})\right)\|U_h^1\|^2
		+\left(3+\frac{4\lambda_2}{\lambda_1}\omega_{2-\alpha}(t_{n+\sigma})\right)\|U_h^0\|^2 
		+\dfrac{2\tau}{\lambda_1\gamma_2} \sum_{i=0}^{n} \norm{f^{i+\sigma}}^2\nonumber\\
		&\leqslant&\left[3+\frac{\lambda_2}{\lambda_1}\omega_{2-\alpha}(t_{n+\sigma})
		\left(5+\frac{\lambda_2}{\lambda_1}\omega_{2-\alpha}(t_\sigma)\right)\right]\norm{U_h^0}^2 \nonumber\\
		&&+\dfrac{2\tau}{\lambda_1\gamma_2}\left[\left(1+\frac{\lambda_2}{4\lambda_1}
		\omega_{2-\alpha}(t_{n+\sigma})\right)\norm{f^\sigma}^2+\sum_{i=1}^{n} \norm{f^{i+\sigma}}^2\right].
	\end{eqnarray*}
	This completes the proof.
\end{proof}

Next, we discuss the error estimate for the fully scheme \eqref{MIM_fuldis1}.

\begin{theorem}
	Let $u$ be the weak solution of the IBVP \eqref{MIM_Model} and $\{U^n_h\}_{n=0}^N$ be the solution of the fully 
	discrete scheme \eqref{MIM_fuldis1}. Then, we have the following bound for the error due to fully discretization:
	\begin{eqnarray*}
		&& \|u(\cdot,t_n)-U_h^n \|\\
		&&\leqslant C \max\limits_{t\in[0,t_{n+1}]} \Big(\tau^{2-\alpha} \left(\norm{u_{tt}} +\norm{u_{ttt}}\right)
		+h^{k+1}\left( \vert u\vert _{k+1}+\vert  u_t\vert _{k+1} +\vert {}_C{\rm{D}}^{\alpha}_{0,t}u\vert _{k+1} \right)\Big).
	\end{eqnarray*}
\end{theorem}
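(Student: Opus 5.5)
The plan mirrors the proof of Theorem \ref{err_L1}, with Theorem \ref{L21sigma_stab_thm} playing the role of Theorem \ref{Stab_L1}. Split the error as
\[
u(\cdot,t_n)-U_h^n=\xi_{2,h}^n-\eta_2^n,\qquad \eta_2^n:=\mathbf{P}u(\cdot,t_n)-u(\cdot,t_n),\qquad \xi_{2,h}^n:=\mathbf{P}u(\cdot,t_n)-U_h^n .
\]
Lemma \ref{projerr_bd} immediately gives $\norm{\eta_2^n}\leqslant Ch^{k+1}\vert u(\cdot,t_n)\vert_{k+1}$. The work is therefore to bound the discrete error $\xi_{2,h}^n$.

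\textbf{Error equation.} Evaluate the weak form \eqref{variation_form} at $t=t_{n+\sigma}$, with test functions $v_h\in V^k_{0,h}$; the NIPG consistency of $B_h$ for sufficiently smooth $u$ justifies this. Subtract the scheme \eqref{MIM_fuldis1}. The exact solution then satisfies the scheme up to the truncation residual
\[
R_2^n=\lambda_1\big(\delta_t u^{n,\sigma}-u_t(t_{n+\sigma})\big)+\lambda_2\big(\delta^\alpha_{n+\sigma}u-{}_C{\rm D}^\alpha_{0,t}u(t_{n+\sigma})\big)+\mathcal{L}\big(u^{n,\sigma}-u(t_{n+\sigma})\big).
\]
For $n=0$, replace $\delta_t u^{0,\sigma}$ by $\delta u^0/\tau$, following Remark \ref{approx_dudt}.

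Using the Galerkin orthogonality \eqref{Galerkin_orth}, the term $B_h(\eta_2^{n,\sigma},v_h)$ disappears. This leaves
\[
\lambda_1(\delta_t\xi_{2,h}^{n,\sigma},v_h)+\lambda_2(\delta^\alpha_{n+\sigma}\xi_{2,h},v_h)+B_h(\xi_{2,h}^{n,\sigma},v_h)=(\chi_2^n,v_h),
\qquad
\chi_2^n=\lambda_1\delta_t\eta_2^{n,\sigma}+\lambda_2\delta^\alpha_{n+\sigma}\eta_2+R_2^n .
\]
This has exactly the form of \eqref{MIM_fuldis1}, with $f^{n+\sigma}$ replaced by $\chi_2^n$.

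\textbf{Bounding $\chi_2^n$.} The residual $R_2^n$ is controlled by the truncation lemmas:
\begin{itemize}
\item the first-order difference is $O(\tau^2)$ by \eqref{fst_dis}, and also for $n=0$ by Remark \ref{approx_dudt};
\item the L2-$1_\sigma$ error is $O(\tau^{3-\alpha})$;
\item the interpolation error $u^{n,\sigma}-u(t_{n+\sigma})$ is $O(\tau^2)$ by the Chen--Stynes lemma, which in the $\mathcal{L}$ term requires $u_{tt}\in H^2$.
\end{itemize}
All of these are dominated by $\tau^{2-\alpha}$, consistent with the stated bound, which is not sharp.

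For the projection part, $\mathbf{P}$ commutes with time differences and with the discrete Caputo operator. Hence $\delta_t\eta_2^{n,\sigma}$ and $\delta^\alpha_{n+\sigma}\eta_2$ equal $(\mathbf{P}-I)$ applied to $u_t(t_{n+\sigma})$ and to ${}_C{\rm D}^\alpha_{0,t}u(t_{n+\sigma})$, up to a further truncation term. Lemma \ref{projerr_bd} then gives
\[
\norm{\chi_2^n}\leqslant Ch^{k+1}\max_t\big(\vert u_t\vert_{k+1}+\vert{}_C{\rm D}^\alpha_{0,t}u\vert_{k+1}\big)+C\tau^{2-\alpha}\max_t\big(\norm{u_{tt}}+\norm{u_{ttt}}\big).
\]

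\textbf{Applying stability.} Choose $U_h^0=\mathbf{P}\phi$, so that $\xi_{2,h}^0=0$. Theorem \ref{L21sigma_stab_thm} applied to $\xi_{2,h}$ yields
\[
\norm{\xi_{2,h}^{n+1}}^2\leqslant \frac{2\tau}{\lambda_1\gamma_2}\Big(1+\frac{\lambda_2}{4\lambda_1}\omega_{2-\alpha}(T)\Big)\sum_{i=0}^n\norm{\chi_2^i}^2\leqslant CT\max_i\norm{\chi_2^i}^2 .
\]
Taking square roots and combining with the bound on $\eta_2^n$ via the triangle inequality gives the theorem.

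\textbf{Main obstacle.} The delicate point is the projection/truncation interchange in bounding $\chi_2^n$. One must justify that $\delta^\alpha_{n+\sigma}\eta_2$ is close to $(\mathbf{P}-I){}_C{\rm D}^\alpha_{0,t}u(t_{n+\sigma})$. The cleanest way is to bound $\delta^\alpha_{n+\sigma}\eta_2$ directly. Since the L2-$1_\sigma$ operator is an integral of $(L_{2,k}\eta_2)'$ against a positive kernel with total mass about $\omega_{2-\alpha}(T)$, one gets
\[
\norm{\delta^\alpha_{n+\sigma}\eta_2}\leqslant C\max_t\norm{\partial_t\eta_2}\leqslant Ch^{k+1}\max_t\vert u_t\vert_{k+1}.
\]
This avoids the commutation issue entirely, at the cost of replacing $\vert{}_C{\rm D}^\alpha u\vert_{k+1}$ by $\vert u_t\vert_{k+1}$. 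That is harmless, since the right-hand side of the theorem already contains $\vert u_t\vert_{k+1}$.
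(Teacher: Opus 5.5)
Your route is the paper's: the same splitting $u(\cdot,t_n)-U_h^n=\xi_{2,h}^n-\eta_2^n$, the same error equation with $B_h(\eta_2^{n,\sigma},v_h)$ removed by \eqref{Galerkin_orth}, and Theorem \ref{L21sigma_stab_thm} applied to $\xi_{2,h}$ with $f$ replaced by $\chi_2^n$. In three places you are more careful than the paper:
\begin{itemize}
\item You fix $U_h^0=\mathbf{P}\phi$. The paper never specifies $U_h^0$, and without this the $\norm{U_h^0}^2$ terms of the stability bound do not vanish.
\item Your direct estimate $\norm{\delta^\alpha_{n+\sigma}\eta_2}\leqslant 2\,\omega_{2-\alpha}(t_{n+\sigma})\max_{t\leqslant t_{n+1}}\norm{\mathbf{P}u_t-u_t}$ is rigorous. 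The paper's \eqref{chi_bd} instead bounds the truncation error of $\eta_1$ by $R_1^n$ without justification.
\item You keep the residual $\mathcal{L}(u^{n,\sigma}-u(t_{n+\sigma}))$. The paper's $\psi^n$ silently drops it, although the scheme pairs $B_h(U_h^{n,\sigma},\cdot)$ with $f^{n+\sigma}$.
\end{itemize}

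\textbf{First gap.} Having found this residual, you then lose it. By the Chen--Stynes lemma it is bounded only by $\frac18\tau^2\max_t\norm{\mathcal{L}u_{tt}}$. That quantity involves $\gamma_1\vert u_{tt}\vert_2$, which is not controlled by $\norm{u_{tt}}+\norm{u_{ttt}}$. Your displayed bound for $\norm{\chi_2^n}$ is therefore missing a term. What your argument actually proves is the estimate with an extra $C\tau^2\max_t\vert u_{tt}\vert_2$ on the right. The paper reaches the stated form only by omitting the term from its error equation.

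\textbf{Second gap (inherited from the paper).} Remark \ref{approx_dudt}, which you cite for $n=0$, is wrong. Taylor expansion about $t_\sigma$ gives $\frac{\delta u^0}{\tau}-u_t(t_\sigma)=\frac{1-2\sigma}{2}\,\tau\,u_{tt}(t_\sigma)+O(\tau^2)$, with $1-2\sigma=\alpha-1\neq0$. For $u=t^2$ the error is exactly $(\alpha-1)\tau$. So $\norm{\chi_2^0}$ is only $O(\tau)$.

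The coefficient of $\norm{f^\sigma}^2$ in Theorem \ref{L21sigma_stab_thm} is $O(\tau)$. This residual therefore contributes $O(\tau^{3/2})$ to $\norm{\xi_{2,h}^{n+1}}$, which is not $O(\tau^{2-\alpha})$ when $\alpha<\frac12$.

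The repair is to treat the first step by hand rather than through the black-box bound. With $\xi_{2,h}^0=0$, testing the $n=0$ error equation with $\xi_{2,h}^1$ gives
\[
\big(\tfrac{\lambda_1}{\tau}+\lambda_2a_0\big)\norm{\xi_{2,h}^1}^2+\sigma\verti{\xi_{2,h}^1}^2=(\chi_2^0,\xi_{2,h}^1),
\]
hence $\norm{\xi_{2,h}^1}\leqslant\tau\norm{\chi_2^0}/\lambda_1=O(\tau^2)$. Now rerun the summation \eqref{En+1} using this bound in place of \eqref{md_U1}--\eqref{U1_bd}. By \eqref{sum_b}, both $\widehat{E}_1$ and $\frac{\lambda_2}{2}\big(\sum_i b_i\big)\norm{\xi_{2,h}^1}^2$ are $O(\tau^{-1}\norm{\xi_{2,h}^1}^2)=O(\tau^3)$. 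Converting $\widehat E_{n+1}$ back to $\norm{\xi_{2,h}^{n+1}}^2$ costs a factor $\tau$, so the first step contributes only $O(\tau^2)$ and its $O(\tau)$ local error is harmless.
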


\begin{proof}
	Let us denote the error $\mathcal{E}_h^n:=u(\cdot,t_n)-U_h^n$ due to fully discretization of \eqref{MIM_Model} 
	by using \eqref{MIM_fuldis1} at $t_n$. Now, let $\xi_{2,h}^n=\mathbf{P}u(\cdot,t_n)-U_h^n$ and 
	$\eta_2^n=\mathbf{P}u(\cdot,t_n)-u(\cdot,t_n)$, for $n\geqslant 0$. Then, from Lemma \ref{projerr_bd}, we have
	\begin{equation}\label{err_addition}
		\norm{u(\cdot,t_n)-U_h^n}\leqslant \norm{\xi_{2,h}^n}+\norm{\eta_2^n}\leqslant 
		\norm{\xi_{2,h}^n}+Ch^{k+1}\vert u(\cdot,t_{n})\vert _{k+1},
	\end{equation}
	which allows us to find a bound only for $\norm{\xi_{2,h}^n}$ to calculate error $\norm{ \mathcal{E}_h^n}$.
	
	Then \eqref{MIM_Model} and \eqref{MIM_fuldis1} yield
	\begin{equation}\label{full_err_eqn}
		\lambda_1\left(\delta_t \mathcal{E}_h^{n,\sigma},v_h\right)+\lambda_2 \left(\delta^{\alpha}_{n+\sigma} 
		\mathcal{E}_h,v_h\right)+B_h\left(\mathcal{E}_h^{n,\sigma},v_h \right)=(\psi^n,v_h),
	\end{equation}
	where $\psi^n=\lambda_1 \left(\delta_t u^{n,\sigma}-u_t(\cdot,t_{n+\sigma}) \right)
	+\lambda_2 \left(\delta^{\alpha}_{n+\sigma}u-{}_C{\rm{D}}^\alpha_{0,t}u(\cdot,t_{n+\sigma}) \right)$.

	By taking $v_h=\xi_{2,h}^{n,\sigma}$ in \eqref{full_err_eqn}, and using the fact that 
	$B_h(\eta_2^n,\xi_{2,h}^n)=0$ and then simplifying, we get
	\begin{eqnarray}\label{full_dis_err_eqn}
		\lambda_1\left(\delta_t \xi_{2,h}^{n,\sigma},\xi_{2,h}^{n,\sigma}\right)+\lambda_2 \left(\delta^{\alpha}_{n+\sigma}\, 
		\xi_{2,h},\xi_{2,h}^{n,\sigma}\right)+B_h\left(\xi_{2,h}^{n,\sigma},\xi_{2,h}^{n,\sigma}\right)
		&=&\left(\psi^n,\xi_{2,h}^{n,\sigma}\right)+\lambda_1\left(\delta_t \eta_2^{n,\sigma},\xi_{2,h}^{n,\sigma}\right)\nonumber\\
		&&+\lambda_2 \left(\delta^{\alpha}_{n+\sigma} \eta_2,\xi_{2,h}^{n,\sigma}\right)\nonumber\\
		&=&\left(\chi_2^n,\xi_{2,h}^{n,\sigma}\right),
	\end{eqnarray}
	where $\chi_2^n:=\psi^n+\lambda_1\, \delta_t \eta_2^{n,\sigma}+\lambda_2\delta^{\alpha}_{n+\sigma}\eta_2$, with
	\[\norm{\chi_2^n}\leqslant C\max\limits_{t\in[0,t_{n+1}]} \Big(\tau^{2-\alpha} 
	\left(\norm{u_{tt}} +\norm{u_{ttt}}\right)+h^{k+1}\left( \vert u\vert _{k+1}+\vert u_t\vert _{k+1} 
	+\vert {}_C{\rm{D}}^{\alpha}_{0,t}u\vert _{k+1} \right)\Big).\] The proof of this will be same as that of \eqref{chi_bd}.
	
	Hence, using Theorem \ref{L21sigma_stab_thm} in \eqref{full_dis_err_eqn} and then using 
	\eqref{err_addition}, one can obtain the required error estimate.
\end{proof}

\section{Numerical experiments}\label{Num_exp_sec}
This section  justifies the theoretical results by giving some particular examples, where the error 
and convergence order will be shown in the tables below.

The error and convergence order are specified by 
\[\mathcal{E}^N_{M}=\max\limits_{ 1\leq n\leq N+1 } \norm{u(\cdot,t_n)-u^n_h}, \quad
R_{M}^N=\log_2\left(\dfrac{\mathcal{E}^N_{M}}{\mathcal{E}^{2N}_{2M}}\right),\] 
if $u(\cdot,t_n)$ represents the solution of the IBVP \eqref{MIM_Model} at $t_n$ and $u_h^n$ represents the solution of the fully discrete scheme \eqref{MIM_ful_dis_L1} or \eqref{MIM_fuldis1}.

For our numerical experiment, we consider $k=1$ for the finite element space and choose the local basis functions
of $\mathbb{P}^1({K}_m)$ as $\psi_0^m,\psi_1^m$ with
\[\psi_0^m(x)=1,\quad \psi_1^m(x)=\frac{2}{h}\left(x-x_{m+\frac{1}{2}}\right),
\quad \forall x\in K_m,\quad 0\leqslant m\leqslant M-1,\]
where $x_{m+\frac{1}{2}}=\frac{1}{2}(x_m+x_{m+1})$ is the midpoint of $K_m$.

\begin{example}\label{MIM_Ex1}
	Consider the following problem over the domain $(0,\frac{\pi}{2})\times (0,1]$:
	\begin{equation}\label{Ex1_eqn}
		\left\{
		\begin{array}{lll}
			\dfrac{\partial u(x,t)}{\partial t}+ {}_C{\rm{D}}^{\alpha}_{0,t} u(x,t)-\dfrac{\partial^2 u(x,t)}{\partial x^2}+ u(x,t)=f(x,t),\\ [7pt]
			u(x,0)=\sin(2 x),\quad x\in (0,\frac{\pi}{2}),\\ [7pt]
			u(0,t)=u(\frac{\pi}{2},t)=0, \quad t\in (0,1].
		\end{array}
		\right.
	\end{equation}
\end{example}

Here, the exact solution of the IBVP \eqref{Ex1_eqn} is $u(x,t)=(1+t^{3+\alpha}) \sin(2 x)$, and the source 
term can be calculate accordingly. The errors and the corresponding order of convergence in the temporal 
direction due to fully discrete scheme \eqref{MIM_Fuldisl1} for Example \ref{MIM_Ex1} are shown in Table 
\ref{MIM_Ex1_tab1}. In Table \ref{MIM_Ex1_tab1}, we have observed that for any $\alpha\in (0,1)$, 
the order of convergence confirms the theoretically order $(2-\alpha)$. Table \ref{MIM_Ex1_tab2} 
describes the error and order of convergence for Example \ref{MIM_Ex1} using fully discrete scheme 
\eqref{MIM_fuldis1}. Here, we can observed that for any $\alpha\in(0,1)$, the temporal order of 
convergence justify theoretical second order of convergence.

\begin{table}[htbp]
	\caption{\label{MIM_Ex1_tab1}  \it{Error and order of convergence for Example \ref{MIM_Ex1} 
			with $k=1$ and CN-$L1$-NIPG scheme.}} \vspace{0.1cm}
	{\centering
		\begin{tabular}{||c||c||c|c|c|c|c||}
			\hline\hline
			&$M=N\rightarrow$ &8&16&32&64&128 \\
			$\alpha\downarrow$&	&	&&&&\\
			\hline\hline
			$0.999$& $\mathcal{E}^N_{M}$&2.3897e-01&	1.6965e-01&	8.3472e-02&	4.7399e-02&	2.4896e-02 \\ [4pt]
			& $R_{M}^N$&0.4942  &  1.0232  &  0.8165 &   0.9290 & $-$ 	\\
			\hline\hline
			$0.99$& $\mathcal{E}^N_{M}$& 2.3767e-01 &	1.6853e-01&	8.2756e-02&	4.6973e-02&	2.1561e-02 	\\ [4pt]
			& $R_{M}^N$&	0.4960  &  1.0261  &  0.8170 &  1.1234  & $-$ 	\\
			\hline\hline
			$0.9$&$\mathcal{E}^N_{M}$& 2.2678e-01 &	1.0370e-01 &	5.7549e-02&	2.9585e-02&	1.5093e-02 	\\ [4pt]
			&  $R_{M}^N$&  1.1289  &  0.8495  &  0.9599 &   0.9710  & $-$ 	\\
			\hline\hline
			$0.8$& $\mathcal{E}^N_{M}$& 2.1837e-01 &	9.7223e-02&	4.3165e-02&	1.9774e-02&	8.8957e-03 	\\ [4pt]
			& $R_{M}^N$&  1.1674 &   1.1714 &  1.1262  &  1.1525  & $-$ 	\\ \hline\hline			
			$0.4$& $\mathcal{E}^N_{M}$& 7.5187e-02 &	2.7121e-02&	9.3243e-03&	3.1740e-03&	1.0633e-03\\ [4pt]
			& $R_{M}^N$&    1.4711 &   1.5403 &  1.5547  &  1.5777 &  $-$ 	\\
			\hline\hline
			$0.2$& $\mathcal{E}^N_{M}$& 5.1627e-02 &	1.4031e-02&	4.0336e-03&	1.2716e-03&	3.6935e-04  \\ [4pt]
			& $R_{M}^N$& 1.8794 &   1.7985 &  1.6654  &  1.7836 &  $-$ 	\\
			\hline\hline
			$0.1$& $\mathcal{E}^N_{M}$& 3.4821e-02&	9.6191e-03&	2.7314e-03&	7.6504e-04&	2.1243e-04 \\ [4pt]
			& $R_{M}^N$& 1.8560 &   1.8163 &  1.8360  &  1.8485 &  $-$ 	\\
			\hline\hline
			$0.05$&$\mathcal{E}^N_{M}$&  2.5209e-02&	8.0354e-03&	2.2833e-03&	5.8218e-04&	1.5704e-04 \\ [4pt]
			& $R_{M}^N$& 1.6495 &   1.8153 &  1.9716  &  1.8904&  $-$ 	\\
			\hline\hline
			$0.025$&$\mathcal{E}^N_{M}$& 2.5310e-02&	8.0560e-03&	1.9211e-03&	5.1088e-04&	1.3462e-04  \\ [4pt]
			& $R_{M}^N$& 1.6516 &   2.0681 &  1.9109  &  1.9241 &  $-$ 	\\
			\hline\hline
			$0.0125$&$\mathcal{E}^N_{M}$& 2.5366e-02&	6.8243e-03&	1.9232e-03&	4.8984e-04&	1.2353e-04 \\ [4pt]
			&$R_{M}^N$& 1.8941 &   1.8271 &  1.9732  &  1.9874&  $-$ 	\\
			\hline\hline
			$0.00625$& $\mathcal{E}^N_{M}$& 2.5395e-02&	6.8304e-03&	1.7689e-03&	4.6938e-04&	1.1840e-04 \\ [4pt]
			& $R_{M}^N$& 1.8945 &   1.9491 &  1.9140  &  1.9871 &  $-$ 	\\
			\hline\hline
		\end{tabular}
		\par}
\end{table}

\begin{table}[htbp]
	\caption{\label{MIM_Ex1_tab2}  \it{Error and order of convergence for Example \ref{MIM_Ex1} with $k=1,\,\sigma=1-\frac{\alpha}{2}$, and L2-$1_\sigma$-NIPG scheme.}} \vspace{0.1cm}
	{\centering
		\begin{tabular}{||c||c||c|c|c|c|c||}
			\hline\hline
			& $M=N\rightarrow$&8& 16&32&64&128 \\
			$\alpha\downarrow$&&&&&&\\
			\hline\hline
			$0.999$&$\mathcal{E}^N_{M}$& 1.9188e-02	&4.9555e-03&	1.2613e-03&	3.1833e-04&	7.9972e-05  \\ [4pt]
			&$R_{M}^N$  & 1.9531 &   1.9741 &   1.9863  &  1.9929  & $-$ 	\\
			\hline\hline
			$0.99$&$\mathcal{E}^N_{M}$& 1.9324e-02	&4.9954e-03&	1.2723e-03&	3.2130e-04&	8.0762e-05  	\\ [4pt]
			&$R_{M}^N$&   1.9517 &   1.9731 &   1.9855  &  1.9922  & $-$ 	\\
			\hline\hline
			$0.9$&$\mathcal{E}^N_{M}$& 2.0710e-02	&5.3960e-03&	1.3817e-03&	3.5031e-04&	8.8333e-05	\\ [4pt]
			&$R_{M}^N$&  1.9403 &   1.9654 &   1.9798  &  1.9876  & $-$ 	\\
			\hline\hline
			$0.8$& $\mathcal{E}^N_{M}$&  2.2315e-02	&5.8516e-03&	1.5033e-03&	3.8176e-04&	9.6339e-05	\\ [4pt]
			&$R_{M}^N$&  1.9311 &   1.9607 &   1.9774  &  1.9865  & $-$ 	\\
			\hline\hline			
			$0.4$&$\mathcal{E}^N_{M}$& 2.9306e-02	&7.7411e-03&	1.9916e-03&	5.0534e-04&	1.2730e-04  \\ [4pt]
			&$R_{M}^N$&  1.9206 &   1.9586 &   1.9786  &  1.9890 &  $-$ 	\\
			\hline\hline
			$0.2$&$\mathcal{E}^N_{M}$& 3.2863e-02	&8.6525e-03&	2.2220e-03&	5.6314e-04&	1.4176e-04   \\ [4pt]
			&$R_{M}^N$& 1.9253 &   1.9612 &   1.9803  &  1.9901 &  $-$ 	\\
			\hline\hline
			$0.1$& $\mathcal{E}^N_{M}$& 3.4579e-02	&9.0790e-03&	2.3286e-03&	5.8976e-04&	1.4841e-04 \\ [4pt]
			&$R_{M}^N$&   1.9293 &   1.9631 &   1.9813  &  1.9906  &  $-$ 	\\
			\hline\hline
			$0.05$& $\mathcal{E}^N_{M}$&3.5411e-02	&9.2823e-03&	2.3791e-03&	6.0233e-04&	1.5154e-04  \\ [4pt]
			&$R_{M}^N$&  1.9316 &   1.9641 &   1.9818  &  1.9908 &  $-$ 	\\
			\hline\hline
			$0.025$& $\mathcal{E}^N_{M}$&3.5820e-02	&9.3812e-03&	2.4035e-03&	6.0841e-04&	1.5306e-04  \\ [4pt]
			&$R_{M}^N$&  1.9329 &   1.9646 &   1.9820  &  1.9910 &  $-$ 	\\
			\hline\hline
			$0.0125$&$\mathcal{E}^N_{M}$& 3.6022e-02	&9.4299e-03&	2.4156e-03&	6.1140e-04&	1.5380e-04         \\ [4pt]
			&$R_{M}^N$&  1.9336 &   1.9649 &   1.9822  &  1.9910  &  $-$ 	\\
			\hline\hline
			$0.00625$&$\mathcal{E}^N_{M}$& 3.6123e-02	&9.4541e-03&	2.4215e-03&	6.1288e-04&	1.5417e-04 \\ [4pt]
			&$R_{M}^N$& 1.9339 &   1.9650 &   1.9822  &  1.9911  &  $-$ 	\\
			\hline\hline
		\end{tabular}
		\par}
\end{table}

\section{Conclusions}\label{Conclusion_sec}

This study presents two high-order fully discrete schemes for the time-fractional MIM diffusion model,
combining the NIPG method with the Crank-Nicolson L1 and
L2-$1_\sigma$ temporal formulas. Numerical experiments have been employed to confirm the unconditional 
stability and optimal convergence that has been determined by rigorous analysis. The proposed schemes accurately
capture the memory effects and anomalous transport behavior characteristic of fractional systems,
with the L2-$1_\sigma$-NIPG method demonstrating superior temporal accuracy. These results offer a reliable
and efficient computational framework for modeling non-Fickian transport processes in porous and
heterogeneous media and pave the way for future extensions to higher-dimensional and
variable-order fractional models.

\vskip 2mm


\section*{Availability of data}
On reasonable request, data will be made available.
\section*{Conflicts of interest}
The authors declare there are no conflicts of interest.
\section*{Funding}
There are no conflicts of interest, according to the authors.

\newpage

\newpage
\goodbreak

\end{document}